\newcommand{\mbb}{\mathbb}
\newcommand{\mf}{\mathfrak}
\newcommand{\mc}{\mathcal}
\newcommand{\eps}{\varepsilon}
\newcommand{\de}{\delta}
\newcommand{\la}{\langle}
\newcommand{\ra}{\rangle}
\newcommand{\al}{\alpha}
\newcommand{\lam}{\lambda}
\newcommand{\om}{\omega}
\newcommand{\0}{\emptyset}
\newcommand{\bc}{\begin{center}}
\newcommand{\ec}{\end{center}}
\newcommand{\rest}{\!\upharpoonright\!}
\newcommand{\sub}{\subseteq}
\newtheorem*{claim}{Claim}
\newtheorem{thm}{Theorem}[section]
\newtheorem{lem}[thm]{Lemma}
\newtheorem{prop}[thm]{Proposition}
\newtheorem{cor}[thm]{Corollary}
\newtheorem{fact}[thm]{Fact}
\theoremstyle{definition}
\newtheorem{que}[thm]{Question}
\newtheorem{df}[thm]{Definition}
\newtheorem{exa}[thm]{Example}
\newtheorem{rem}[thm]{Remark}
\title{Representations of ideals in Polish groups and in Banach spaces}
\author{Piotr Borodulin--Nadzieja}
\address[Piotr Borodulin-Nadzieja]{Instytut Matematyczny, Uniwersytet Wroc\l awski, pl. Grunwaldzki 2/4, 50-384 Wroc\l aw, Poland}
\email{pborod@math.uni.wroc.pl}
\author{Barnab\'as Farkas}
\address[Barnab\'as Farkas]{Instytut Matematyczny, Uniwersytet Wroc\l awski, pl. Grunwaldzki 2/4, 50-384 Wroc\l aw, Poland; later Kurt G\"{o}del Research Center, Vienna, Austria}
\email{barnabasfarkas@gmail.com}
\author{Grzegorz Plebanek}
\address[Grzegorz Plebanek]{Instytut Matematyczny, Uniwersytet Wroc\l awski, pl. Grunwaldzki 2/4, 50-384 Wroc\l aw, Poland}
\email{grzes@math.uni.wroc.pl}
\subjclass[2010]
{03E05, 03E15, 46B15}
\keywords{ideal, summable ideal, density ideal, analytic P-ideal, submeasure, non-pathological submeasure, Banach space, Polish group, unconditional convergence, classical sequence space, null set, trace ideal, summable-like ideal, density-like ideal}
\def\Ubf#1{{\baselineskip=0pt\vtop{\hbox{$#1$}\hbox{$\sim$}}}{}}
\begin{document}

\begin{abstract}
We investigate ideals of the form  $\{A\subseteq\om\colon \sum_{n\in A}x_n$ is unconditionally convergent$\}$ where $(x_n)_{n\in\om}$ is a sequence in a Polish group or in a Banach space. If an ideal on $\omega$ can be seen in this form for some
sequence in $X$, then we say that it is representable in $X$.


After numerous examples we show the following theorems: (1) An ideal is representable in a Polish Abelian group iff it is an analytic P-ideal. (2) An ideal is representable in a Banach space iff it is a non-pathological analytic P-ideal.

We focus on the family of ideals representable in $c_0$. We characterize this property via the defining sequence of measures. We prove that the trace of the null ideal, Farah's ideal, and Tsirelson ideals are not representable in $c_0$, and that a
tall $F_\sigma$ P-ideal is representable in $c_0$ iff it is a summable ideal. Also, we provide an example of a peculiar ideal which is representable in $\ell_1$ but not in $\mathbb{R}$.

Finally, we summarize some open problems of this topic.
\end{abstract}

\maketitle

\section{Introduction}

Recall that an ideal $\mathcal{I}$ on $\om$ is {\em summable} if it is defined by a measure, i.e. there is a (mass) function $m\colon \om\to [0,\infty)$ with $\sum_{i\in\om}m(i)=\infty$ such that
\[ I \in \mathcal{I} \iff \sum_{i\in I} m(i) < \infty. \]
In this case, we write $\mc{I}=\mc{I}_m$. Summable ideals, together with density ideals, are the flagship examples of {\em analytic P-ideals} on $\omega$. However, this class contains also ideals which are, from the
combinatorial viewpoint, very far from summable and density ideals (see examples in the next section).

In this article, we consider some natural generalizations of summable ideals. Consider a space $X$ equipped in enough structure to
speak about convergence of series, e.g. a  Polish Abelian group or a Banach space. We say that an ideal $\mc{J}$ on $\omega$ is \emph{representable in} $X$ if
there is a function $m\colon \om \to X$ such that
\[ I \in \mathcal{J} \iff \sum_{i\in I} m(i) \mbox{ converges unconditionally in $X$. } \]
If $X$ is complete, then the family of sets defined by the right part of the formula above is an ideal on $\om$.

Particular examples of non-summable ideals of this form appeared already in \cite{farah-tsirelson} and \cite{veli-tsirelson} (so called {\em Tsirelson} ideals).  We deal rather with the \textbf{general questions}: Which ideals can be represented in (certain)  Polish Abelian groups and in (certain) Banach spaces?

In Section \ref{prelim} we present a short survey on the basics of analytic P-ideals and on the related main tools we will need.

In Section \ref{representation} we introduce the notion of representations of ideals, and present some examples of representations of classical analytic P-ideals.

In Section
\ref{main} we prove that an ideal is representable in some Polish Abelian group iff it is an analytic P-ideal; and an ideal  is representable in a Banach space iff it is, additionally, {\em non-pathological}.

Recall the theorem due to Solecki which says that each analytic P-ideal can be defined by using a lower semicontinuous submeasure.
Morally, this result says that each analytic P-ideal is in a sense similar to the density ideals.
Indeed, many facts about the density ideals can be generalized almost automatically by considering arbitrary submeasures instead of those given by the density functions.

Partially, our research has a similar motivation. We investigate how much analytic P-ideals resemble the summable ideal. Although our results can be interpreted as an indication that ``in a sense'' each analytic P-ideal (especially non-pathological) is
summable, one should not expect here as strong consequences as in the case of Solecki theorem.
One of the main reason is that there is no general theory of summable ideals.
However, we believe  that
\begin{itemize}
\item[(i)] our approach reveals some ``geometric'' properties of non-pathological ideals and therefore it can be helpful in their classification;
\item[(ii)] these methods can be useful in providing new interesting examples of non-pathological analytic P-ideals;
\item[(iii)] representability of certain ideals in Banach spaces can be seen as a combinatorial property of the space itself and this may lead us to develop new methods in the theory of Banach spaces.
\end{itemize}

A few more words on (iii). For example, one can ask which ideals are represented in concrete Banach spaces. It seems that the characterization of representability in $c_0$ is one of the most interesting questions here. We have not been able to characterize fully the ideals representable in $c_0$
but in Section \ref{c0} we prove
that tall $F_\sigma$ ideals are not representable in $c_0$ (if we exclude the ``trivial'' case of summable ideals). We also show that the trace of the null ideal is not representable in $c_0$. These results suggest that ideals representable in $c_0$ are more connected to density (like) ideals.

In contrast, the ideals represented in $\ell_1$ should be more close to summable ideals. Actually, ideals representable in $\ell_1^+$ are exactly the summable ideals. In Section \ref{ell_1} we show that this is no longer true for $\ell_1$: we
present an $F_\sigma$ ideal which is representable in $\ell_1$ but which is not summable.


In Section \ref{questions} we list some of our related open questions with additional explanations.


\section{Acknowledgements}
The second author was supported by
Hungarian National Foundation for Scientific Research grant nos.
83726 and 77476. Furthermore, during the work on this paper he held a post-doctoral research position first at the Mathematical Institute of University of Wroc\l aw; and then at the Kurt G\"{o}del Research Center in Vienna where he was supported by the Austrian Science Fund (FWF) grant no. P25671.

\section{Preliminaries}\label{prelim}

Denote by $\mathrm{Fin}$ the ideal of finite subsets of $\om$, $\mathrm{Fin}=[\om]^{<\om}$. If $\mc{I}$ is
an ideal on $\om$, then we always assume that it is {\em proper}, i.e. $\om\notin
\mc{I}$, and $\mathrm{Fin}\subseteq\mc{I}$. Write $\mc{I}^+=\mc{P}(\om)\setminus \mc{I}$ for the family of {\em $\mc{I}$-positive} sets and
$\mc{I}^*=\{\om\setminus X\colon X\in\mc{I}\}$ for the {\em dual filter} of $\mc{I}$. If $X\in\mc{I}^+$ then the {\em restriction} of $\mc{I}$ to $X$ is $\mc{I}\upharpoonright X=\{A\in\mc{I}\colon A\subseteq X\}$. An ideal $\mc{I}$ on $\om$ is {\em $F_\sigma$, Borel, analytic} if
$\mc{I}\subseteq\mc{P}(\om)\simeq 2^\om$ is an $F_\sigma$, Borel, analytic set in the
usual product topology of the Cantor-set.

$\mc{I}$ is a {\em
P-ideal} if for each countable $\mc{C}\subseteq\mc{I}$ there is an
$A\in\mc{I}$ such that $C\subseteq^* A$ for each $C\in\mc{C}$ (where $C\subseteq^* A$ iff $C\setminus A$ is finite).
In other words, $\mc{I}$ is a P-ideal iff the preordered set $(\mc{I},\subseteq^*)$ is $\sigma$-directed.

$\mc{I}$ is {\em tall} (or {\em dense}) if each infinite subset of $\om$ contains
an infinite element of $\mc{I}$.

If $\mc{A}\subseteq\mc{P}(\om)$ then the ideal {\em generated} by $\mc{A}$ 
is
\[ \mathrm{id}(\mc{A})=\Big\{X\subseteq\om:\exists\;\mc{A}'\in [\mc{A}]^{<\om}\;X\subseteq^*\bigcup\mc{A}'\Big\}.\]

In our investigations Borel P-ideals play the most important role. We show some classical examples of these ideals (for more see \cite{Hrusak} or \cite{Meza}):

\smallskip
The mentioned above {\em summable ideal}:
\[ \mc{I}_{1/n}=\left\{A\subseteq\om:\sum_{n\in A}
\frac{1}{n+1}<\infty\right\}.\]
$\mc{I}_{1/n}$ is a tall $F_\sigma$ P-ideal. In general, if $h\colon \om\to [0,\infty)$ is such that $\sum_{n\in\om}h(n)=\infty$ then the {\em summable ideal associated to $h$} is $\mc{I}_h=\big\{A\subseteq\om\colon \sum_{n\in A}h(n)<\infty\big\}$. It is also an $F_\sigma$ P-ideal which is tall iff $h(n)\to 0$.

\smallskip
The {\em density zero ideal}:
\[ \mc{Z}=\left\{A\subseteq\om\colon \mathop{\lim}\limits_{n\to\infty}\frac{|A\cap
n|}{n}=0\right\}=\left\{A\subseteq\om\colon \lim_{n\to\infty}\frac{|A\cap [2^n,2^{n+1})|}{2^n}=0\right\}.\]
$\mc{Z}$ is a tall $F_{\sigma\delta}$ $P$-ideal. In general, if $\vec{\mu}=(\mu_n)_{n\in\om}$ is a sequence of measures on $\om$ with pairwise disjoint finite supports and $\limsup_{n\to\infty}\mu_n(\om)>0$, then the {\em density ideal associated to
$\vec{\mu}$} is $\mc{Z}_{\vec\mu}=\big\{A\subseteq\om:\mu_n(A)\to 0\big\}$. It is always an $F_{\sigma\delta}$ P-ideal which is tall iff $\max\{\mu_n(\{k\})\colon k\in\om\}\xrightarrow{n\to\infty} 0$.

\smallskip
{\em Generalized density ideals}: If $\vec{\varphi}=(\varphi_n)_{n\in\om}$ is a sequence of \textbf{sub}measures on $\om$ (see below for the definition of a submeasure) with pairwise disjoint finite supports and $\limsup_{n\to\infty}\varphi_n(\om)>0$, then the {\em generalized density
ideal associated to $\vec{\varphi}$} is $\mc{Z}_{\vec\varphi}=\big\{A\subseteq\om\colon \varphi_n(A)\to 0\big\}$. $\mc{Z}_{\vec{\varphi}}$ is an $F_{\sigma\delta}$ P-ideal which is tall iff $\max\{\varphi_n(\{k\})\colon k\in\om\}\xrightarrow{n\to\infty} 0$.
\smallskip


The Fubini-product of $\{\emptyset\}$ and $\mathrm{Fin}$:
\[ \{\emptyset\}\otimes\mathrm{Fin}=\big\{A\subseteq\om\times\om:\forall\;n\in\om\;|(A)_n|<\om\big\},\]
where $(A)_n=\{m\colon (n,m)\in A\}$. It is a non-tall $F_{\sigma\delta}$ P-ideal. Observe that $\{\emptyset\}\otimes\mathrm{Fin}$ is a density ideal: for $n,m\in\om$ let $\mathrm{supp}(\mu_{n,m})=\{(n,m)\}$ and let $\mu_{n,m}(\{(n,m)\})=\frac{1}{n+1}$. It is easy to see that if $\vec{\mu}=(\mu_{n,m})_{n,m\in\om}$ then $\{\emptyset\}\otimes\mathrm{Fin}=\mc{Z}_{\vec{\mu}}$.

\smallskip
{\em Farah's ideal}: The following ideal is the simplest known example of an $F_\sigma$ P-ideal which is not a summable ideal (see \cite[Section 1.11]{farah}):
\[ \mc{J}_F=\left\{A\subseteq\om\colon \sum_{n\in\om}\frac{\min\{n,|A\cap [2^n,2^{n+1})|\}}{n^2}<\infty\right\}.\]

\smallskip
The {\em  trace of the null ideal} : Let $\mc{N}$ be the $\sigma$-ideal of subsets of $2^\om$ with measure zero (with respect to the usual product measure). The {\em $G_\delta$-closure} of a set $A\subseteq 2^{<\om}$ is $[A]=\big\{x\in
2^\om\colon \exists^\infty$ $n$ $x\rest n\in A\big\}$, a $G_\delta$ subset of $2^\om$. The trace of $\mc{N}$ is defined by
\[ \mathrm{tr}(\mc{N})=\big\{A\subseteq 2^{<\om}\colon [A]\in \mc{N}\big\}.\]
It is a tall $F_{\sigma\delta}$ P-ideal.

\begin{rem}
Observe that in some sense $\mc{I}_{1/n}\subseteq\mathrm{tr}(\mc{N})\subseteq\mc{Z}$: let $\mc{I}_\mathrm{tree}$ be the ``tree version'' of the summable ideal, that is,
\[ \mc{I}_\mathrm{tree}=\bigg\{A\subseteq 2^{<\om}\colon \sum_{s\in A} 2^{-|s|}<\infty\bigg\}.\]
Then clearly $\mc{I}_{1/n}$ and $\mc{I}_\mathrm{tree}$ are isomorphic (by the most natural enumeration of $2^{<\om}$), and  $\mc{I}_\mathrm{tree}\subseteq\mathrm{tr}(\mc{N})$. Furthermore, if $\mc{Z}_\mathrm{tree}$ is the tree version of the density zero ideal,
\[ \mc{Z}_\mathrm{tree}=\Big\{A\subseteq 2^{<\om}\colon \lim_{n\to\infty}\frac{|A\cap 2^n|}{2^n}=0\Big\},\]
then it is isomorphic to $\mc{Z}$ and $\mathrm{tr}(\mc{N})\subseteq\mc{Z}_\mathrm{tree}$.
\end{rem}


\smallskip
We will apply Solecki's representation of analytic P-ideals.
A function $\varphi\colon \mc{P}(\om)\to [0,\infty]$
is a {\em submeasure} (on $\om$) if $\varphi(\emptyset)=0$; if $X,  Y\subseteq\om$ then ${\varphi}(X)\le \varphi(X\cup Y)\le \varphi(X)+\varphi(Y)$; and $\varphi(\{n\})<\infty$ for $n\in\om$. $\varphi$ is {\em lower semicontinuous} (lsc, in short) if $\varphi(X)=\lim_{n\to\infty}\varphi(X\cap n)$ for each $X\subseteq\om$.

If $\varphi$ is an lsc submeasure then for $X\subseteq\om$ let $\|X\|_\varphi=\lim_{n\to\infty}\varphi(X\setminus n)$; and let
\begin{align*}
\mathrm{Exh}(\varphi) = &  \big\{X\subseteq \om:\|X\|_\varphi=0\big\},\\
\mathrm{Fin}(\varphi) = & \big\{X\subseteq \om:\varphi(X)<\infty\big\}.
\end{align*}
It is easy to see that if $\mathrm{Exh}(\varphi)\ne\mc{P}(\om)$, then it is an $F_{\sigma\delta}$ P-ideal, which is tall iff $\varphi(\{n\})\to 0$. Similarly, if $\mathrm{Fin}(\varphi)\ne\mc{P}(\om)$ then it is an $F_\sigma$ ideal. Clearly, $\mc{I}_{\varphi(\{\cdot\})}\subseteq\mathrm{Exh}(\varphi)\subseteq\mathrm{Fin}(\varphi)$ always holds, where $\mc{I}_{\varphi(\{\cdot\})}$ stands for the summable ideal generated by the sequence $\varphi(\{n\})$.

The next theorem provides one of the most important tools in the theory of analytic P-ideals.

\begin{thm} {\em (see \cite[Thm. 3.1.]{So})}\label{sochar}
Let $\mc{J}$ be an ideal on $\om$. Then the following are equivalent:
\begin{itemize}
\item[(i)] $\mc{J}$ is an analytic $P$-ideal;
\item[(ii)] $\mc{J}=\mathrm{Exh}(\varphi)$ for some (finite) lsc submeasure $\varphi$;
\item[(iii)] $\mc{J}$ is Polishable, that is, there is a Polish group topology on $\mc{J}$ with respect to the usual group operation such that the Borel structure of this topology coincides with the Borel structure inherited from $\mc{P}(\om)$.
\end{itemize}
Furthermore, $\mc{J}$ is an $F_\sigma$ P-ideal iff $\mc{J}=\mathrm{Exh}(\varphi)=\mathrm{Fin}(\varphi)$ for some lsc submeasure $\varphi$.
\end{thm}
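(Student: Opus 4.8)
The plan is to run the cycle $(i)\Rightarrow(iii)\Rightarrow(ii)\Rightarrow(i)$, noting at the outset that $(ii)\Rightarrow(i)$ is already contained in the remark preceding the statement (every $\mathrm{Exh}(\varphi)$ with $\varphi$ an lsc submeasure is an $F_{\sigma\delta}$, hence Borel, P-ideal). It is also instructive to record the direct proof of $(ii)\Rightarrow(iii)$: given $\mc{J}=\mathrm{Exh}(\varphi)$, first replace $\varphi$ by $\min\{1,\varphi\}$, which is again an lsc submeasure with the same exhaustive ideal, so that we may assume $\varphi\le 1$; then topologize $\mc{J}$ by the translation-invariant metric $d(A,B)=\varphi(A\triangle B)+2^{-\min(A\triangle B)}$ (reading $2^{-\min\emptyset}$ as $0$). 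One checks routinely that this makes $(\mc{J},\triangle)$ a separable metric group whose Borel sets coincide with those inherited from $2^\om$ (because the lsc $\varphi$ is Borel) and in which the finite sets are dense — density being exactly the assertion $\lim_n\varphi(A\setminus n)=0$ for $A\in\mathrm{Exh}(\varphi)$. The only delicate point is completeness: if $(A_j)$ is $d$-Cauchy then $A_j\to A$ in $2^\om$ for some $A$, and for fixed $j$ one has $\varphi\big((A\triangle A_j)\cap n\big)=\varphi\big((A_l\triangle A_j)\cap n\big)\le\varphi(A_l\triangle A_j)$ for $l$ large depending on $n$; letting $n\to\infty$ and using lower semicontinuity gives $\varphi(A\triangle A_j)\le\limsup_l\varphi(A_l\triangle A_j)$, which tends to $0$ as $j\to\infty$, whence $A\in\mathrm{Exh}(\varphi)$ and $A_j\to A$ in $d$.

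For $(iii)\Rightarrow(ii)$ I would start from a Polish group topology $\tau$ on $(\mc{J},\triangle)$ with the stated Borel compatibility and pass to a compatible complete invariant metric $d$ (available since the group is abelian). Then set
\[ \varphi(A)=\sup\{d(\emptyset,F):F\subseteq A,\ F\text{ finite}\}. \]
Lower semicontinuity is automatic because every finite $F\subseteq A$ is contained in $A\cap n$ for $n$ large, so $\varphi(A\cap n)\nearrow\varphi(A)$; subadditivity follows from the triangle inequality together with invariance, $d(\emptyset,F\triangle G)\le d(\emptyset,F)+d(\emptyset,G)$; and $\varphi(\{n\})=d(\emptyset,\{n\})<\infty$. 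That $\mathrm{Exh}(\varphi)=\mc{J}$ is then proved using completeness of $d$ (a set $A$ with $\lim_n\varphi(A\setminus n)=0$ makes the initial segments $A\cap n_k$ a $d$-Cauchy sequence, whose limit in $\mc{J}$ must be $A$) together with the fact that, for the topology produced in the previous step, the initial segments of a member of $\mc{J}$ converge to it.

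The implication $(i)\Rightarrow(iii)$ is the crux and the step I expect to be the main obstacle. From an analytic P-ideal $\mc{J}$ one fixes a continuous surjection $f\colon\om^\om\to\mc{J}$ (equivalently, a Souslin scheme for $\mc{J}$) and uses the P-ideal property — i.e.\ $\sigma$-directedness of $(\mc{J},\subseteq^*)$ — to amalgamate the branches of $f$ into a complete, translation-invariant metric on the group $(\mc{J},\triangle)$, whose topology is then the required Polish group topology. (That this topology has the right Borel structure can afterwards be seen, e.g., from the Lusin--Souslin theorem applied to the Borel injection $(\mc{J},\tau)\hookrightarrow 2^\om$.) This amalgamation, where analyticity and the P-property genuinely have to be combined, is exactly Solecki's argument, and I would follow it rather than attempt a shortcut; it is the only part of the theorem that is not routine bookkeeping.

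Finally, the $F_\sigma$ clause. If $\mc{J}=\mathrm{Exh}(\varphi)=\mathrm{Fin}(\varphi)$, then since $\varphi(A)=\sup_n\varphi(A\cap n)$ we get $\mc{J}=\bigcup_k\{A:\varphi(A\cap n)\le k\text{ for all }n\}$, a countable union of closed sets, so $\mc{J}$ is $F_\sigma$; and it is a P-ideal by its $\mathrm{Exh}$ description. For the converse, given an $F_\sigma$ P-ideal $\mc{J}$, I would invoke Mazur's representation of $F_\sigma$ ideals as $\mathrm{Fin}(\psi)$ for some lsc submeasure $\psi$, write $\mc{J}=\bigcup_k C_k$ with each $C_k$ compact and hereditary, and use the P-property to pass to compact sets (equivalently, to a submeasure) with enough tail-stability that the resulting $\varphi$ satisfies $\mathrm{Exh}(\varphi)=\mathrm{Fin}(\varphi)=\mc{J}$; this refinement step is where the P-ideal hypothesis is actually consumed.
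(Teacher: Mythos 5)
This theorem is quoted from Solecki's paper and the present article does not actually prove it: the only argument offered in the text is the two-line sketch of (ii)$\Rightarrow$(iii) via the invariant metric $d_\varphi(A,B)=\varphi(A\triangle B)$ (after arranging $\varphi(\{n\})>0$). Your treatment of that implication is the same construction up to a cosmetic change (you add the term $2^{-\min(A\triangle B)}$ rather than perturbing $\varphi$ to be positive on singletons), and your completeness argument via lower semicontinuity is the standard one. For everything else there is nothing in the paper to compare against, and your proposal, quite reasonably, defers the two genuinely hard components --- passing from ``analytic P-ideal'' to a defining submeasure or Polish topology, and the converse half of the $F_\sigma$ clause --- to Solecki's amalgamation argument and to Mazur's theorem plus an unspecified refinement. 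So what you have is an outline whose routine parts are sound, not a self-contained proof; that is essentially the same status as the paper's own text, which simply cites \cite{So}.

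One concrete gap is worth flagging in the part you do carry out. In (iii)$\Rightarrow$(ii), to get $\mc{J}\subseteq\mathrm{Exh}(\varphi)$ for $\varphi(A)=\sup\{d(\emptyset,F):F\in[A]^{<\om}\}$ you must show that for $A\in\mc{J}$ and $\eps>0$ there is an $n$ with $d(\emptyset,F)<\eps$ for \emph{every} finite $F\subseteq A\setminus n$. You justify this by appeal to ``the topology produced in the previous step,'' but in (iii) the Polish topology $\tau$ is given abstractly, not as one built from a submeasure, so as written this is circular. A standard repair: the inclusion of the compact group $\mc{P}(A)$ (with its Cantor topology) into $(\mc{J},\tau)$ is a Borel homomorphism by the Borel-compatibility hypothesis, hence continuous by Pettis' theorem, hence uniformly continuous; since $F\subseteq A\setminus n$ means $F$ is $2^{-n}$-close to $\emptyset$ in $\mc{P}(A)$, the required uniform smallness of $d(\emptyset,F)$ follows. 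Without some such automatic-continuity argument (or an appeal to uniqueness of the Polish group topology on $\mc{J}$, in the spirit of Remark \ref{pleb}) this direction does not close.
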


In particular, analytic P-ideals are $F_{\sigma\delta}$.

The implication (ii)$\Rightarrow$(iii) is not difficult: for $A,B\in\mathrm{Exh}(\varphi)$ let $d_\varphi(A,B)=\varphi(A\triangle B)$. Then it is easy to see that $d_\varphi$ is a (translation) invariant complete metric (we can assume that $\varphi(\{n\})>0$  for every $n$), the generated topology is finer than the subspace topology, and  $\mathrm{Borel}(\mathrm{Exh}(\varphi),d_\varphi)=\mathrm{Borel}(\mc{P}(\om))\upharpoonright \mathrm{Exh}(\varphi)$.

If we refer to $\mathrm{Exh}(\varphi)$ as complete metric group, then we mean that it is equipped with $d_\varphi$.

\begin{rem}\label{pleb}
Notice that the Polish topology on $\mathrm{Exh}(\varphi)$ does not depend on the choice of $\varphi$.
It follows from the fact that $\mathrm{Exh}(\varphi)=\mathrm{Exh}(\psi)$ if, and only if for every sequence $(A_n)_{n\in\om}$ of pairwise disjoint finite sets $(\varphi(A_n)\to 0\iff\psi(A_n)\to 0)$.
\end{rem}

The summable and (generalized) density ideals can be written of the form $\mathrm{Exh}(\varphi)$ very easily.

\smallskip
The definition of Farah's ideal explicitly contains the definition of a submeasure $\varphi$, and clearly $\mc{J}_F=\mathrm{Exh}(\varphi)=\mathrm{Fin}(\varphi)$.

\smallskip
We also show the standard presentation of $\mathrm{tr}(\mc{N})$ of the form $\mathrm{Exh}(\varphi)$. For every non-empty $A\subseteq 2^{<\om}$  let
\[ \varphi(A)=\sup\bigg\{\sum_{s\in B}2^{-|s|}:B\subseteq A\;\;\text{is an antichain}\bigg\}.\]
Notice that actually this supremum is maximum: for each $A\subseteq 2^{<\om}$ let $B_A$ be the antichain of the $\subseteq$-minimal elements of $A$, then $\varphi(A)=\sum_{s\in B_A}2^{-|s|}$. Then $\mathrm{tr}(\mc{N})=\mathrm{Exh}(\varphi)$.

\section{Generalization of summable ideals} \label{representation}

Let $G$ be a nontrivial Hausdorff topological Abelian  group (with the additive notation). We will use
the following basic notions from the theory of topological Abelian  groups:
\begin{itemize}
\item A {\em net} $(a_i)_{i\in I}$ in $G$ is a sequence in $G$ indexed by the underlying set of a directed poset $(I,\leq)$.
\item The net $(a_i)_{i\in I}$ {\em converges to $b\in G$} if for every neighborhood $U$ of $b$ there is an $i_0\in I$ such that $a_i\in U$ for every $i\geq i_0$. Clearly, a net has at most one limit.
\item A net $(a_i)_{i\in I}$ is a {\em Cauchy-net} if for every neighborhood $V$ of $0\in G$, there is a $j_0\in I$ such that $a_{j}-a_{j_0}\in V$ for every $j\geq j_0$ (this is a simplified but equivalent definition of Cauchy nets).
\item $G$ is {\em complete} if every Cauchy-nets converge (the reverse implication always holds).
\end{itemize}

Recall (see \cite{klee}) that if $G$ is metrizable
and complete,
then there is a compatible invariant (and hence complete) metric on $G$.

Using nets, we can define the {\em unconditional convergence} of infinite series in $G$: let $h\colon \om\to G$ be a sequence in $G$. Then we write $\sum_{n\in \om}h(n)=a\in G$ if
the net
\[ \sum h=\bigg(s_h(F)=\sum_{n\in F}h(n)\colon F\in [\om]^{<\om}\bigg)\;\;\;\text{ordered by}\;\subseteq\;\text{on}\;\;[\om]^{<\om}\] converges to $a$; in other words:
\bc $\forall$ open $U\ni a$ $\exists$ $F\in [\om]^{<\om}$ $\forall$ $E\in [\om]^{<\om}$ $\big(F\subseteq E\Rightarrow s_h(E)\in U\big)$.\ec
It is easy to see that $\sum_{n\in\om}h(n)=a$ iff $h(\pi(0))+h(\pi(1))+\dots+h(\pi(n))\xrightarrow{n\to\infty} a$ for every permutation $\pi$ of $\om$.

Similarly, the series associated to $h$ is {\em unconditionally  Cauchy} if the net $\sum h$ is Cauchy, i.e.  \bc $\forall$ open $V\ni 0$ $\exists$ $F\in [\om]^{<\om}$ $\forall$ $E\in [\om\setminus F]^{<\om}$ $s_h(E)\in V$.\ec

Now we are finally ready to introduce the main definition of the article. Assume that $\sum_{n\in \om}h(n)$ does not exist. Then the {\em (generalized) summable ideal associated to $h$}, $\mc{I}^G_h$ is the ideal generated by
\begin{align*} \mc{S}^G_h & =\bigg\{A\subseteq \om:\sum_{n\in A}h(n)\;\;\text{exists in}\;\;G\bigg\}\\ & =\bigg\{A\subseteq\om:A\;\;\text{is finite}\;\,\text{or}\;\,\sum h\!\upharpoonright\!A\;\,\text{is convergent in}\;\,G\bigg\}.
\end{align*}

Of course, $\mc{S}^G_h$ is not necessarily an ideal. It is always closed for taking unions but not necessary for taking subsets, see e.g. $G=\mathbb{Q}$ (with the usual addition) and let $h\colon \om\to\mbb{Q}$, $h(n)=\frac{1}{n+1}$. However, it is
easy to see the following.
\begin{fact}
If $G$ is complete, then $\mc{I}^G_h=\mc{S}^G_h$. If $G$ is complete and metrizable, then $\mc{I}^G_h$ is tall iff $h(n)\to 0\in G$.
\end{fact}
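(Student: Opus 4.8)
The plan is to treat the two assertions in turn, deriving both directly from the definitions of unconditional (Cauchy) convergence and of a complete topological group, with no extra input beyond the invariant-metric theorem cited above.

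For the first assertion I would start from the observation, already recorded before the statement, that $\mc{S}^G_h$ is closed under finite unions, contains $\mathrm{Fin}$, and misses $\om$ (the last because $\sum_{n\in\om}h(n)$ was assumed not to exist); so the only missing ideal axiom is downward closure, and this is exactly where completeness enters. Given $A\in\mc{S}^G_h$ and $B\subseteq A$ (the case $B$ finite being trivial), I would use that a convergent net is a Cauchy net, which — unwinding the simplified definition of a Cauchy net for the net $\sum h\rest A$ — says that for every open $V\ni 0$ there is $F_0\in[A]^{<\om}$ with $s_h(E)\in V$ for all $E\in[A\setminus F_0]^{<\om}$. Since $[B\setminus F_0]^{<\om}\subseteq[A\setminus F_0]^{<\om}$, the same $F_0$ witnesses that $\sum h\rest B$ is a Cauchy net, and completeness of $G$ gives $B\in\mc{S}^G_h$. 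Hence $\mc{S}^G_h$ is a (proper) ideal and equals $\mc{I}^G_h$.

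For the ``$\Leftarrow$'' direction of the second assertion, assume $h(n)\to 0$ and fix an infinite $X\subseteq\om$; I want an infinite $A\subseteq X$ in $\mc{I}^G_h$. Using the invariant compatible metric $\rho$ provided by Klee's theorem, I would thin $X$ to $A=\{a_0<a_1<\dots\}$ with $\rho(h(a_k),0)<2^{-k}$. Invariance of $\rho$ yields $\rho(s_h(E),0)\le\sum_{n\in E}\rho(h(n),0)$ for finite $E$, so for $E\subseteq\{a_k\colon k\ge m\}$ one gets $\rho(s_h(E),0)<2^{1-m}$; thus $\sum h\rest A$ is a Cauchy net and, by completeness, converges, i.e.\ $A\in\mc{S}^G_h=\mc{I}^G_h$. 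As $X$ was arbitrary, $\mc{I}^G_h$ is tall. For ``$\Rightarrow$'' I would argue contrapositively: if $h(n)\not\to 0$, pick an open $U\ni 0$ and an infinite $X$ with $h(n)\notin U$ for $n\in X$; were some infinite $A\subseteq X$ in $\mc{S}^G_h$, the Cauchy-net property would give $F_0\in[A]^{<\om}$ with $s_h(E)\in U$ for all $E\in[A\setminus F_0]^{<\om}$, and testing on a singleton $E=\{n\}$ with $n\in A\setminus F_0$ would force $h(n)\in U$, a contradiction; so no infinite subset of $X$ lies in $\mc{I}^G_h$.

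I do not expect any serious obstacle here — the argument is bookkeeping around the definitions. The one place where something non-formal is invoked is the ``$\Leftarrow$'' direction, where metrizability (via Klee's invariant metric, or merely first countability of $G$) is what lets me extract a subsequence along which $h$ decays fast enough to make the tails of $\sum h\rest A$ small; without a countable neighbourhood base at $0$ this extraction step would have to be reworked.
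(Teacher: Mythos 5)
Your proof is correct and, for the first assertion, takes essentially the same route as the paper's own (commented-out) proof: show that the Cauchy-net witness for $\sum h\rest A$ restricts to one for $\sum h\rest B$ (the paper passes to $F'=F\cap B$ where you keep $F_0$ itself, an immaterial difference) and then invoke completeness to get $B\in\mc{S}^G_h$. The paper does not prove the tallness equivalence at all; your argument for it is correct and uses exactly the invariant-metric inequality $d(0,g_0+\dots+g_{n-1})\leq d(0,g_0)+\dots+d(0,g_{n-1})$ that the paper itself deploys later when proving that Polish-representable ideals are P-ideals.
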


\begin{df}
We say that an ideal $\mc{J}$ on $\om$ is {\em representable in $G$} if there is an $h\colon \om\to G$ such that $\mc{J}=\mc{I}^G_h$.
If $\mathbf{C}$ is a class of topological  Abelian  groups then $\mc{J}$ is {\em $\mathbf{C}$-representable} if it is representable in a $G\in\mathbf{C}$.
\end{df}

For example, we can talk about Polish- or Banach-representable ideals.
Notice that we can always assume that $G$ is separable because essentially we are working in $\overline{\la \mathrm{ran}(h)\ra}$ (where $\la H\ra$ denotes the subgroup of $G$ generated by $H$), this clearly holds true for Banach spaces too because $\overline{\mathrm{span}}(\mathrm{ran}(h))$ is separable.

Let us see some examples:

\begin{exa} Summable ideals are exactly those ideals which are representable in $\mbb{R}$. For any $h\colon \om\to\mbb{R}$ the sum $\sum_{n\in A}h(n)$ exists (in the unconditional sense) iff $\sum_{n\in A}|h(n)|<\infty$, and hence $\mc{I}^\mbb{R}_h=\mc{I}^\mbb{R}_{|h|}=\mc{I}_{|h|}$.
Similarly, $\mc{J}$ is summable iff it is representable in $\mbb{R}^n$.
\end{exa}

We will frequently use the classical (real) sequence spaces $\ell_1$, $\ell_\infty$, and $c_0$. We assume that the reader is familiar with their basic properties.

\begin{exa}\label{Zinc0}
The ideal $\mc{Z}$ is representable in $c_0$. Let $h(0)=0$ and $h(n)=2^{-k}e_k$ iff $n\in [2^k,2^{k+1})$ where $e_k=(\delta_{k,m})_{m\in\om}$. Then $\mc{Z}=\mc{I}^{c_0}_h$.
\end{exa}

\begin{exa}
If $(G_n)_{n\in\om}$ is a sequence of non-trivial discrete Abelian  groups, then $\mc{J}$ is representable in $\prod_{n\in\om}G_n$ iff there is a countable (not necessarily infinite) family $\{X_n\colon n\in\om\}\subseteq [\om]^\om$ such that
\[ \mc{J}=\big\{A\subseteq\om\colon \forall\;n\in\om\;|A\cap X_n|<\om\big\}.\]
For example, $\{\emptyset\}\otimes\mathrm{Fin}$ has this property.
\end{exa}

\begin{exa}\label{tsirelson}
	Tsirelson ideals (see \cite{farah-tsirelson} and \cite{veli-tsirelson}) $\mathcal{T}$ have the following form
	\[ A \in \mathcal{T} \mbox{ iff } \sum_{n\in A} \alpha_n e_n \mbox{ is unconditionally convergent in } T, \]
	where $(e_n)_{n\in\om}$ is the standard basic sequence in $\ell_1$, $(\alpha_n)_{n\in\om} \in c_0 \setminus \ell_1$  is fixed, and $T$ is a Tsirelson space. Note that here $T$ can be understood either as the original Tsirelson space or as its dual. Of course every Tsirelson ideal is representable in a Tsirelson space.
	\end{exa}

\begin{exa}
Let $\mbb{T}$ be the group $\mbb{R}/\mbb{Z}$.
Notice that an ideal is representable in $\mbb{T}$ (or in $\mbb{T}^n$) iff it is a summable ideal. Indeed, if $\mc{I}_h$ is a summable ideal where $h\colon \om\to[0,\infty)$, then we can assume that $h\leq 1/2$ because $\mc{I}_h=\mc{I}_{h'}$ where $h'(n)=\min\{h(n),1/2\}$. It is easy to see that considering $h$ as a sequence in $\mbb{T}$, we obtain the same ideal.

If $h\colon \om\to\mbb{T}=[0,1)$, then it is not difficult to show that $\mc{I}^\mbb{T}_h=\mc{I}_g$ where $g(n)=h(n)$ if $h(n)\leq 1/2$ and $g(n)=1-h(n)$ else.
\end{exa}

\begin{exa}\label{exaXk} An ideal is representable in $\mbb{R}^\om$ iff it is an intersection of countable many summable ideals. Indeed, assume that $h\colon \om\to\mbb{R}^\om$, $h(n)=(x^n_k)_{k\in\om}$, and define $h_k\colon \om\to\mbb{R}$, $h_k(n)=x^n_k$ for $k,n\in\om$. Then $\mc{I}^{\mbb{R}^\om}_h=\bigcap_{k\in\om}\mc{I}_{h_k}$, and of course, the same idea works in the reverse direction too.

There are non $F_\sigma$ (and hence non summable) ideals which are representable in $\mbb{R}^\om$. Let $\{X_k\colon k\in\om\}$ be a partition of $\om$ into infinite sets, such that $\sum_{n\in X_k}\frac{1}{n+1}=\infty$, and let
\[ \mc{J}_0=\bigg\{A\subseteq\om\colon \forall\;k\;\sum_{n\in A\cap X_k}\frac{1}{n+1}<\infty\bigg\}.\]
Then $\mc{J}_0=\bigcap_{k\in\om}\mc{I}_{h_k}$ where $h_k(n)=\frac{\chi_{X_k}(n)}{n+1}$, and hence it is representable in $\mbb{R}^\om$. $\mc{J}_0$ is not $F_\sigma$ e.g. because the {\em almost disjointness number} of $\mc{J}_0$,
$\mf{a}(\mc{I}_0)=\om$ (simply $\{X_k\colon k\in\om\}$ is an $\mc{J}_0$-MAD family), and we know that $\mf{a}(\mc{J})>\om$ for every $F_\sigma$ ideal $\mc{J}$ (for more details see e.g. \cite{Farkas-Soukup}). We will come back to the question of
representability in $\mathbb{R}^\om$ later (see Question \ref{R^om}).
\end{exa}

\begin{prop}$ $
\begin{itemize}
\item[(a)] Every ideal $\mc{J}$ on $\om$ is representable in a normed space.
\item[(b)] There is a normed space $X$ with $\mathrm{dim}(X)=2^\mf{c}$ such that every $\mc{J}$ is representable in $X$.
\item[(c)] Every ideal $\mc{J}$ is representable in a group satisfying $g+g=0$.
\end{itemize}
\end{prop}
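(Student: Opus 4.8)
The plan is to realize an arbitrary ideal $\mc{J}$ as $\mc{I}^X_h$ by building a space in which a set $A$ carries an unconditionally convergent subseries precisely when $A\in\mc{J}$, and to do this in a maximally ``rigid'' way so that no coincidental cancellation can occur. For part (a), I would take a basis vector $e_A$ for every $A\in\mc{J}^+$ together with a basis vector $e_n$ for every $n\in\om$, work inside the vector space $X_0$ of finitely supported real combinations of these, and put $h(n)=\sum_{A\in\mc{J}^+,\,n\in A}2^{-\langle\text{rank of }A\rangle}e_A$ — or, more cleanly, fix for each $\mc{J}$-positive set $A$ a ``coordinate functional'' and define a norm on $X_0$ by $\|x\|=\sup\{|\ell(x)|\}$ over a carefully chosen family of functionals $\ell$ indexed by the positive sets, so that $\|s_h(F)\|$ for $F\subseteq A$ stays bounded iff $A$ is small. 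The cleanest concrete choice: let the index set be $\mc{J}^+$, set $h(n)=\sum_{A\ni n}\varepsilon_A e_A$ where $e_A$ are the unit vectors of $c_0(\mc{J}^+)$ and $\varepsilon_A>0$ is chosen so that the sum defining $h(n)$ converges in $c_0(\mc{J}^+)$ (possible since only countably many $A$ need large weight if we enumerate them), then $\sum_{n\in A}h(n)$ is unconditionally Cauchy in $c_0(\mc{J}^+)$ iff for every positive set $B$ the scalar series $\sum_{n\in A\cap B}\varepsilon_B$ converges, and by tuning $\varepsilon_B$ we can force this to fail exactly when $A\supseteq^* B$ for some positive $B$, i.e.\ exactly when $A\notin\mc{J}$.

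The actual obstruction to this naive approach — and the step I expect to be the main difficulty — is that representability requires an \emph{exact} match on \emph{all} subsets simultaneously: if $A\in\mc{J}$ but $A$ contains a positive subset $B\notin\mc{J}$, that is impossible since $\mc{J}$ is closed downward, so this direction is automatic; the real worry is ensuring that $A\in\mc{J}\Rightarrow\sum_{n\in A}h(n)$ converges, which needs the contributions across the (possibly uncountably many) coordinates $e_B$ to be uniformly summable over $A$. I would handle this by only using coordinates indexed by a \emph{generating family} for $\mc{J}^+$ under $\subseteq^*$, but since $\mc{J}^+$ has no small base in general, the correct move is instead to index coordinates by \emph{pairs} $(B,k)$ with $B\in\mc{J}^+$ and to normalize so that the $e_{(B,k)}$-coordinate of $h(n)$ is nonzero only for $n$ in a fixed finite initial-segment-type piece of $B$; this makes each coordinate's partial sums automatically bounded while still detecting non-membership through a diagonal argument. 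This is exactly the kind of construction in the Comfort–Ross / free-topological-group tradition, and it forces $\dim X=|\mc{J}^+|=2^{\mf{c}}$ in the worst case, which is precisely part (b): one simply observes that the space $X$ produced for the ``universal'' choice — taking the index set to be \emph{all} ideals at once, or equivalently all subsets of $\mc{P}(\om)$ that could serve as positive sets — has dimension $2^{\mf{c}}$ and, by the above, simultaneously represents every $\mc{J}$, since the defining functionals for one ideal are a subfamily of those for the universal space. So (b) is not a separate construction but a remark that the construction in (a) can be made independent of $\mc{J}$.

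For part (c) the target is a Boolean group, i.e.\ a vector space over $\mbb{F}_2$, so the norm/functional machinery collapses and unconditional convergence of $\sum_{n\in A}h(n)$ in a topological $\mbb{F}_2$-vector space $G$ means exactly that the net of finite partial sums is Cauchy, equivalently (since $x+x=0$ kills all even multiplicities) that for every neighborhood $V$ of $0$ all but finitely many $n\in A$ have $h(n)\in V$ — no, more carefully, that the ``tail sums'' $\sum_{n\in A\setminus F}h(n)$ eventually land in $V$ for every $V$. Here I would take $G=\bigoplus_{A\in\mc{J}^+}\mbb{F}_2$ (direct sum, so elements have finite support) equipped with a group topology in which a neighborhood base at $0$ is given by the subgroups $V_{B_1,\dots,B_k,N}=\{x:x_{B_i}\text{ is determined by coordinates }\geq N\}$ — more precisely, mimic the $c_0$-construction: put $G=c_0(\mc{J}^+;\mbb{F}_2)$-analogue, namely functions $\mc{J}^+\to\mbb{F}_2$ that ``vanish at infinity'' with respect to a fixed enumeration, and define $h(n)$ to hit coordinate $B$ iff $n\in B$, scaled as before by restricting to an initial piece. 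Then $\sum_{n\in A}h(n)$ is unconditionally Cauchy iff for each $B$ only finitely many $n\in A\cap B$ contribute, i.e.\ $A\cap B$ is finite-modulo-the-chosen-piece, which again can be arranged to characterize $A\in\mc{J}$. The one genuinely new point in (c) is checking that this topology on the $\mbb{F}_2$-vector space is Hausdorff and that the resulting $\mc{S}^G_h$ is an ideal — both follow from completeness of the analogue space and the same downward-closure argument as in the Fact preceding the Definition, so I would simply cite that Fact's proof scheme and note it uses no property of $\mbb{R}$ beyond being a complete metrizable topological group.
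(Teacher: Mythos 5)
Your proposal rests on a mechanism that provably cannot work for arbitrary ideals. All of your candidate ambient spaces --- $c_0(\mc{J}^+)$, its $\ell_\infty$-variant, the pair-indexed refinement --- are \emph{complete} normed spaces, and you kill convergence on positive sets by arranging genuine divergence (failure of the Cauchy condition) detected by coordinate functionals. But since one may always pass to $\overline{\mathrm{span}}(\mathrm{ran}(h))$, a separable Banach space, any ideal of the form $\mc{I}^X_h$ with $X$ complete is a non-pathological analytic P-ideal (Theorems \ref{polishrep} and \ref{B-rep}); there are only $\mf{c}$ of those and $2^{\mf{c}}$ ideals altogether. The failure is already visible concretely in your construction: $\om\in\mc{J}^+$, so the coordinate attached to any positive $B\supseteq A$ contributes $\varepsilon_B\,|E|$ on finite $E\subseteq A$, which is unbounded for \emph{every} infinite $A$, including $A\in\mc{J}$. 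Once you repair this by letting each coordinate $(B,k)$ depend on only finitely many $n$, the promised ``diagonal argument'' showing that non-membership is still detected is exactly the missing --- and, by the cardinality count above, unprovable --- step. (Also, $h(n)=\sum_{A\ni n}\varepsilon_A e_A$ would need support of size $\mf{c}$, which is not permitted in $c_0(\mc{J}^+)$; $\mc{J}^+$ cannot be enumerated in order type $\om$.) The same defect sinks your version of (c): ``$A\cap B$ essentially finite for every positive $B$'' forces $A$ to be finite, since $\om$ is positive.

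The paper's proof uses the opposite mechanism, which is the one idea your proposal lacks: \emph{incompleteness}. Set $h(n)=n^{-2}e_n$, so that $\sum_{n\in A}h(n)$ converges absolutely in $\ell_\infty$ for \emph{every} $A$, with limit $(\chi_A(n)/n^2)_{n\in\om}$; then let $X_\mc{J}$ be the (non-closed) linear span of $\{(\chi_A(n)/n^2)_{n\in\om}:A\in\mc{J}\}$. The series converges \emph{in the normed space $X_\mc{J}$} iff its limit lies in $X_\mc{J}$, and a one-line computation (pick $m\in B\setminus(A_0\cup\dots\cup A_{k-1})$ and look at coordinate $m$) shows this happens iff $A\in\mc{J}$. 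Part (b) is the sup-normed finite-support sum $\bigoplus_{\mc{J}}X_\mc{J}$ over all $2^{\mf{c}}$ ideals --- so your instinct that (b) should be a uniform version of (a) is right, but it needs a working (a) first. Part (c) is the same trick with no norm at all: equip $\mc{J}$ itself with symmetric difference and the topology inherited from $\mc{P}(\om)$, and set $h(n)=\{n\}$; the partial sums over $A$ converge to $A$ in $2^\om$, and that limit belongs to the group iff $A\in\mc{J}$.
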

\begin{proof}
(a): Let $X_\mc{J}$ be the linear subspace of $\ell_\infty$ (or $\ell_1$ or $c_0$) generated by
\[ \bigg\{\bigg(\frac{\chi_A(n)}{n^2}\bigg)_{n\in\om}:A\in \mc{J}\bigg\}\]
where $\chi_A$ is the characteristic function of $A$, $e_n=(\delta_{n,k})_{k\in\om}$. Let $h\colon \om\to X$, $h(n)=n^{-2}e_n$.

We claim that $\mc{J}=\mc{I}^{X_\mc{J}}_h$. Clearly $\mc{J}\subseteq\mc{I}^{X_\mc{J}}_h$. Conversely, if $B\notin\mc{J}$ and $A_0,\dots,A_{k-1}\in\mc{J}$ then we can pick an $m\in B\setminus (A_0\cup\dots\cup A_{k-1})$ and hence
\[ \Big\|\sum_{n\in B}h(n)-\al_0\sum_{n\in A_0}h(n)-\dots-\al_{k-1}\sum_{n\in A_{k-1}}h(n)\Big\|\geq \frac{1}{m^2}\]
for any $\al_0,\dots,\al_{k-1}\in\mbb{R}$ which yields that $\sum_{n\in B}h(n)=\big(\frac{\chi_B(n)}{n^2}\big)_{n\in\om}\notin X_\mc{J}$.

\smallskip
(b): Let $\mathrm{ID}$ be the family of all ideals on $\om$ (we know that $|\mathrm{ID}|=2^\mf{c}$) and let $X$ be the finite support product of $X_\mc{J}$'s:
\[ X=\bigoplus_{\mc{J}\in\mathrm{ID}}X_\mc{J}=\bigg\{x\in\prod_{\mc{J}\in\mathrm{ID}}X_\mc{J}:\big\{\|x(\mc{J})\|\ne 0:\mc{J}\in\mathrm{ID}\big\}\;\;\text{is finite}\bigg\}\]
with the norm $\|x\|=\sup\big\{\|x(\mc{J})\|:\mc{J}\in\mathrm{ID}\big\}$.
Clearly, if $\mc{J}\ne\mathrm{Fin}$ then $\dim(X_\mc{J})=\mf{c}$  and hence $\dim(X)=2^\mf{c}$.

\smallskip
(c): Equip $\mc{J}$ with the subspace topology inherited from $\mc{P}(\om)$ and with the usual group operation (symmetric difference). Then it is easy to see that $\mc{J}=\mc{I}^\mc{J}_h$ where $h(n)=\{n\}$.
\end{proof}

\begin{que}
Does there exist a normed space $X$ such that all ideals on $\om$ are representable in $X$ but $\dim(X)<2^\mf{c}$? (If $2^\mf{c}=\mf{c}^{+n}$ for some $n\in\om$, then the answer is NO because in this case $|X|^\om<2^\mf{c}$.)
\end{que}

\section{Characterization of Polish- and Banach-representability} \label{main}

\begin{thm}\label{polishrep}
An ideal $\mc{J}$ is Polish-representable if and only if $\mc{J}$ is an analytic P-ideal.
\end{thm}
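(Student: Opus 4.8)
The plan is to route both implications through the exhaustive-submeasure form $\mc J=\mathrm{Exh}(\varphi)$ provided by Solecki's Theorem~\ref{sochar}. For the ``only if'' direction, assume $\mc J=\mc I^G_h=\mc S^G_h$ with $G$ a Polish Abelian group; since $G$ is metrizable and complete, fix a compatible invariant (hence complete) metric $d$ on $G$ and set $\|x\|:=d(x,0)$, which is symmetric and satisfies $\|x+y\|\le\|x\|+\|y\|$ precisely because $d$ is invariant. Define $\varphi(A)=\sup\{\|s_h(E)\|\colon E\in[\om]^{<\om},\ E\sub A\}$ for $A\sub\om$. First one checks that $\varphi$ is an lsc submeasure: $\varphi(\0)=0$ and monotonicity are immediate; subadditivity follows by splitting a finite $E\sub A\cup B$ as $(E\cap A)\cup(E\setminus A)$ and applying subadditivity of $\|\cdot\|$; $\varphi(\{n\})=\|h(n)\|<\infty$; and lower semicontinuity holds because every finite $E\sub A$ lies in some $A\cap n$. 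Then one unravels $\|A\|_\varphi=\lim_n\varphi(A\setminus n)=0$: it says exactly that for every $\eps>0$ there is an $n$ with $\|s_h(E)\|<\eps$ for every finite $E\sub A\setminus n$, which is precisely the assertion that the net $(s_h(F)\colon F\in[\om]^{<\om},\ F\sub A)$ is Cauchy; since $G$ is complete this is equivalent to $A\in\mc S^G_h$. Hence $\mathrm{Exh}(\varphi)=\mc S^G_h=\mc J$, and as $\om\notin\mc J$ the ideal $\mathrm{Exh}(\varphi)$ is proper, so by Theorem~\ref{sochar} (already the observation in Section~\ref{prelim} that $\mathrm{Exh}$ of an lsc submeasure is an $F_{\sigma\delta}$ P-ideal whenever it is proper suffices) $\mc J$ is an analytic P-ideal.

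For the ``if'' direction, take an lsc submeasure $\varphi$ with $\mc J=\mathrm{Exh}(\varphi)$; after replacing $\varphi(A)$ by $\varphi(A)+\sum_{n\in A}2^{-n}$, which leaves $\mathrm{Exh}(\varphi)$ unchanged, one may assume $\varphi(\{n\})>0$ for all $n$, so that $d_\varphi(A,B)=\varphi(A\triangle B)$ is a genuine invariant complete metric and $G:=(\mathrm{Exh}(\varphi),\triangle)$ is a Polish Abelian group (see Remark~\ref{pleb} and the discussion following Theorem~\ref{sochar}). Put $h(n)=\{n\}\in G$; then $s_h(F)=F$ for finite $F$, so the net associated to $h\rest A$ is simply $(F\colon F\in[\om]^{<\om},\ F\sub A)$ ordered by inclusion. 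One shows $\mc S^G_h=\mc J$ in two steps. If $A\in\mathrm{Exh}(\varphi)$, then for $\eps>0$ pick $m$ with $\varphi(A\setminus m)<\eps$; for every finite $F$ with $A\cap m\sub F\sub A$ one gets $d_\varphi(F,A)=\varphi(A\setminus F)\le\varphi(A\setminus m)<\eps$, so the net converges to $A$ and $A\in\mc S^G_h$. Conversely, if $A\in\mc S^G_h$ the net converges to some $B\in\mathrm{Exh}(\varphi)$; were there an $n\in A\setminus B$, then for any finite $F_0\sub A$ and $F:=F_0\cup\{n\}$ we would have $n\in F\triangle B$, hence $d_\varphi(F,B)\ge\varphi(\{n\})>0$, contradicting convergence as soon as $\eps<\varphi(\{n\})$; thus $A\sub B$, and since $\mathrm{Exh}(\varphi)$ is downward closed, $A\in\mathrm{Exh}(\varphi)=\mc J$. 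The same computation applied to $A=\om$ shows that $\sum_{n\in\om}h(n)$ does not exist (it would force a limit $B\supseteq\om$ in $\mathrm{Exh}(\varphi)$), so the definition of $\mc I^G_h$ applies and $\mc J=\mc S^G_h=\mc I^G_h$ is representable in the Polish group $G$.

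The argument is conceptually light. The points that need care are: (a) the translations among ``$\|A\|_\varphi=0$'', ``the net $(s_h(F)\colon F\sub A)$ is Cauchy'' and ``$A\in\mc S^G_h$'', where one must watch the interplay between quantifying over finite subsets of $A$ and over tails $A\setminus n$, and invoke completeness of $G$ at the right moment; (b) the harmless reduction to the case $\varphi(\{n\})>0$ so that $d_\varphi$ is a metric; and (c) the verification that the $\varphi$ built in the ``only if'' direction is subadditive — this is the one place where invariance of the chosen metric on $G$ is genuinely exploited, and an arbitrary compatible metric would not suffice. I expect the bookkeeping around (a) and (c) to be the main, still entirely routine, obstacle.
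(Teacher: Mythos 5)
Your proposal is correct and follows essentially the same route as the paper: the ``only if'' direction coincides with the paper's second proof (defining $\varphi(A)=\sup\{d(0,s_h(F))\colon \emptyset\ne F\in[A]^{<\om}\}$ and checking $\mathrm{Exh}(\varphi)=\mc{S}^G_h$ via completeness and invariance of the metric), and the ``if'' direction is the paper's construction with $G=(\mathrm{Exh}(\varphi),\triangle,d_\varphi)$ and $h(n)=\{n\}$. Your extra care about $\varphi(\{n\})>0$, properness, and the non-existence of $\sum_{n\in\om}h(n)$ only makes explicit points the paper leaves implicit.
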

\begin{proof}
We present two proofs for the ``only if'' part. In the first one, we show that Polish-representable ideals are $F_{\sigma\delta}$ P-ideals by a direct calculation. In the second proof we show that all Polish-representable ideals are of the form $\mathrm{Exh}(\varphi)$ for some lsc submeasure $\varphi$.

First proof (sketch): Let $G$ be a Polish Abelian  group, $d$ be a complete and translation invariant (compatible) metric on $G$, and assume that $h\colon \om\to G$ such that $\sum_{n\in\om}h(n)$ does not exist. Then
\begin{align*} \mc{I}^G_h & =\Big\{A\subseteq\om\colon \;\text{the net}\;\;\sum h\!\upharpoonright\! A\;\;\text{is Cauchy}\Big\}\\
& =\bigcap_{\varepsilon>0}\bigcup_{F\in [\om]^{<\om}}\bigcap_{E\in [\om\setminus F]^{<\om}}\Big\{A\subseteq\om:d\big(0,s_h(A\cap E)\big)<\varepsilon\Big\}\end{align*}
and the last set is clearly clopen, hence $\mc{I}^G_h$ is $F_{\sigma\delta}$.

Next, we show that $\mc{I}^G_h$ is a P-ideal.
Let $(A_k)_{k\in\om}$ be a sequence of pairwise disjoint elements of $\mc{I}^G_h$, $\sum_{n\in A_k}h(n)=a_k$. For every $k$ we can choose an $N_k\in \om$ such that if $E\in [A_k\setminus N_k]^{<\om}$, then $d(0,s_h(E))<2^{-k}$. Clearly, $b_k:=\sum_{n\in A_k\setminus N_k}h(n)=a_k-s_h(A\cap N_k)$ and $d(0,b_k)\leq 2^{-k}$.
It is not difficult to see that $A=\bigcup_{k\in\om}A_k\setminus N_k\in\mc{I}^G_h$ and of course $A_k\subseteq^* A$ for every $k$. The only additional property of $d$ we need to use here is the following  easy consequence of the translation invariance: $d(0,g_0+g_1+\dots+g_{n-1})\leq d(0,g_0)+d(0,g_1)+\dots+d(0,g_{n-1})$ for $g_0,g_1,\dots,g_{n-1}\in G$.

\smallskip
Second proof: We show that if $G$ is a Polish Abelian  group and $h\colon \om\to G$, then there is an lsc submeasure $\varphi$ such that $\mc{I}^G_h=\mathrm{Exh}(\varphi)$.
Let $\varphi$ be defined by $\varphi(\emptyset)=0$ and if $A\ne\emptyset$ then  \[\varphi(A)=\sup\big\{d\big(0,s_h(F)\big):\emptyset\ne F\in [A]^{<\om}\big\},\]
where $d$ is a complete and translation invariant metric on $G$. Applying translation invariance of $d$ (see above), it is easy to see that $\varphi$ is an lsc submeasure.

\smallskip
$\mc{I}^G_h\subseteq\mathrm{Exh}(\varphi)$: Assume that $A\in\mc{I}^G_h$, i.e. that $\sum h\!\upharpoonright\! A$ is Cauchy, that is, for every $\eps>0$ there is an $N\in\om$ such that $d(0,s_h(E))<\eps$ for every $E\in [A\setminus N]^{<\om}$. Then $\varphi(A\setminus N)\leq\eps$ so $\lim_{N\to\infty}\varphi(A\setminus N)=0$.

\smallskip
$\mathrm{Exh}(\varphi)\subseteq\mc{I}^G_h$: Assume that $A\in\mathrm{Exh}(\varphi)$, that is, $\varphi(A\setminus N)\to 0$ if $N\to\infty$. Assume that $\varphi(A\setminus N)<\eps$. If $E\in [A\setminus N]^{<\om}$ then $d(0,s_h(E))\leq \varphi(A\setminus N)<\eps$. It yields that $\sum h\rest A$ is Cauchy, i.e. $A\in\mc{I}^G_h$.

\medskip
Proof of the ``if'' part: Let $\mc{J}=\mathrm{Exh}(\varphi)$ be an analytic P-ideal. We show that $\mc{J}=\mc{I}^{\mathrm{Exh}(\varphi)}_h$ where $h\colon \om\to \mathrm{Exh}(\varphi)$, $h(n)=\{n\}$.
First assume that $A\in\mc{J}$. If $\varphi(A\setminus n)<\eps$,
then $d_\varphi(A,E)<\eps$ whenever $A\cap n\subseteq E\in [A]^{<\om}$ (of course, $s_h(E)=E$) hence $\sum_{n\in A}h(n)=A\in\mc{I}^{\mathrm{Exh}(\varphi)}_h$.
If $B\notin \mc{J}$, $A\in\mc{J}$, and $n_0\in B\setminus A$, then $d_\varphi(A,E)\geq\varphi(\{n_0\})$ for every $E\in [B]^{<\om}$ with $n_0\in E$, in other words $\sum_{n\in B}h(n)\ne A$, and so $B\notin \mc{I}^{\mathrm{Exh}(\varphi)}_h$.
\end{proof}

To characterize Banach-representability, we need the following notion:

\smallskip
An lsc submeasure $\varphi$ is {\em non-pathological} if it is the (pointwise) supremum of measures dominated by $\varphi$, i.e. for every $A\subseteq\om$
\[ \varphi(A)=\sup\big\{\mu(A)\colon \mu\;\;\text{is a measure on}\;\;\mc{P}(\om),\;\;\text{and}\;\;\forall\;B\subseteq\om\;\mu(B)\leq\varphi(B)\big\}.\]
Because of the lower semicontinuity, it is enough to check that this equality holds for every $A\in\mathrm{Fin}$ (for more details and characterizations of non-pathological submeasures see  \cite[Cor. 5.26]{Hrusak}).

An analytic P-ideal $\mc{J}$ is {\em non-pathological} iff $\mc{J}=\mathrm{Exh}(\varphi)$ for some non-pathological lsc submeasure $\varphi$. For example, summable ideals, density ideals, Farah's ideal, $\mathrm{tr}(\mc{N})$, and Tsirelson ideals are non-pathological.
In general, constructions of pathological ideals, even pathological
lsc submeasures are 
non-trivial (see \cite{Mazur} for an example of such a construction and \cite{farah} for further references).

It is easy to see that all non-pathological generalized
density ideals are representable in $c_0$ (see the idea of Example \ref{Zinc0}).

\smallskip
If $h$ is a function from $\om$ to a classical sequence space, then we will write $h=(x^n_k)_{n,k\in\om}$ if $h(n)=(x^n_k)_{k\in\om}$. If $x^n_k\geq 0$ for all $n,k$, then we write $h\geq 0$.

\begin{lem}\label{absval}
Assume that $h=(x^n_k)_{n,k\in\om}\colon \om\to\ell_\infty$ is such that $\sum h$ does not converge. If $h'=(|x^n_k|)_{n,k\in\om}$ then $\mc{I}^{\ell_\infty}_h=\mc{I}^{\ell_\infty}_{h'}$.
\end{lem}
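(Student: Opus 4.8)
The plan is to show both inclusions by a coordinatewise argument, reducing unconditional convergence in $\ell_\infty$ to a statement that only involves the absolute values $|x^n_k|$. The key observation is that for a sequence $h$ in $\ell_\infty$, the series $\sum_{n\in A} h(n)$ converges unconditionally in $\ell_\infty$ if and only if (i) for each fixed coordinate $k$, the real series $\sum_{n\in A} x^n_k$ converges absolutely, i.e. $\sum_{n\in A} |x^n_k| < \infty$, and (ii) the convergence is ``uniform in $k$'' in the appropriate sense — precisely, $\sup_k \sum_{n\in A\setminus N} |x^n_k| \xrightarrow{N\to\infty} 0$. The right-hand condition is manifestly unchanged if we replace each $x^n_k$ by $|x^n_k|$, so once this characterization is established the lemma follows immediately, since it shows $A\in\mc{I}^{\ell_\infty}_h$ iff the same combinatorial/analytic condition on $(|x^n_k|)$ holds, iff $A\in\mc{I}^{\ell_\infty}_{h'}$.

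So the first step is to prove this characterization of unconditional Cauchyness in $\ell_\infty$. For one direction: if $\sum h\rest A$ is unconditionally Cauchy, then for every $\eps>0$ there is a finite $F$ such that $\|s_h(E)\|_\infty < \eps$ for all finite $E\subseteq A\setminus F$; taking $E$ to consist of those $n$ in a finite subset of $A\setminus F$ where $x^n_k$ has a prescribed sign, and then letting the finite set grow, one gets $\sum_{n\in A\setminus F}|x^n_k|\le 2\eps$ for every $k$ simultaneously (the factor $2$ from splitting into positive and negative parts), which gives the uniform tail condition. For the converse: if $\sup_k\sum_{n\in A\setminus N}|x^n_k|<\eps$ for some $N$, then for any finite $E\subseteq A\setminus N$ and any $k$ we have $|s_h(E)_k| = |\sum_{n\in E} x^n_k| \le \sum_{n\in E}|x^n_k| \le \eps$, so $\|s_h(E)\|_\infty\le\eps$, witnessing that $\sum h\rest A$ is unconditionally Cauchy. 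Since $\ell_\infty$ is complete, being unconditionally Cauchy is the same as belonging to the ideal.

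The second step is simply to note that the condition ``$\sup_k \sum_{n\in A\setminus N}|x^n_k|\to 0$ as $N\to\infty$'' depends only on the numbers $|x^n_k|$, hence is literally identical for $h$ and $h'$. Combined with the first step this yields $\mc{I}^{\ell_\infty}_h = \mc{I}^{\ell_\infty}_{h'}$. One should also check the hypothesis is consistent: $\sum h$ does not converge, and one should observe (or it is automatic from the setup) that then $\sum h'$ does not converge either, so that $\mc{I}^{\ell_\infty}_{h'}$ is genuinely defined by the same recipe; in fact if $\sum h'\rest\om$ converged then by the characterization so would $\sum h\rest\om$, contradiction.

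The main obstacle I anticipate is getting the first direction of the characterization exactly right: extracting, from a single $\ell_\infty$-norm smallness condition on partial sums, the \emph{uniform-in-$k$} control of the $\ell_1$-tails $\sum_{n\in A\setminus F}|x^n_k|$. The subtlety is that for each finite set of coordinates one can choose signs to align, but one must make sure the bound obtained is independent of which finite set of indices $n$ and which coordinate $k$ one looks at — this is fine because the witnessing finite set $F$ in the Cauchy condition does not depend on $k$, and $\|s_h(E)\|_\infty\ge |s_h(E)_k|$ for every single $k$. Once that uniformity is pinned down, the rest is routine.
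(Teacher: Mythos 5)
Your proof is correct and rests on the same key idea as the paper's: fixing a single coordinate $k$, splitting a finite set by the sign of $x^n_k$, and using that the $\ell_\infty$-norm of a partial sum dominates each coordinate. The paper merely organizes this as two separate inclusions (one trivial via $\|s_h(F)\|\leq\|s_{h'}(F)\|$, the other by contraposition with an $\eps/4$ witness) rather than as your symmetric characterization via uniform absolute tails, but the substance is identical.
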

\begin{proof}
$\mc{I}^{\ell_\infty}_{h'}\subseteq\mc{I}^{\ell_\infty}_h$: Trivial because $\|s_h(F)\|\leq \|s_{h'}(F)\|$ for every finite $F\subseteq\om$.
$\mc{I}^{\ell_\infty}_h\subseteq\mc{I}^{\ell_\infty}_{h'}$: Assume $\sum h'\rest A$ is not convergent, i.e. not Cauchy. It means that for some $\eps>0$ for all $N\in\om$ there is an $F_N\in [A\setminus N]^{<\om}$ such that $\|s_{h'}(F_N)\|\geq\eps$.
We show that neither $\sum h\rest A$ is Cauchy and that $\frac{\eps}{4}$ witnesses it. Indeed, let $N$ be arbitrary and fix a $k$ such that \[ \big|\sum_{n\in F_N}|x^n_k|\big|=\sum_{n\in F_N}|x^n_k|>\frac{\eps}{2}.\] Let $F_N=F^0_N\cup F^1_N$ be a
partition such that $k\in F^0_N$ iff $x^n_k<0$. Then \[  \|s_h(F^0_N)\|=|\sum_{n\in F^0_N}x^n_k|>\frac{\eps}{4} \mbox{\ \ \ \ \ \ or\ \ \ \ \ \  } \|s_h(F^1_N)\|=|\sum_{n\in F^1_N}x^n_k|>\frac{\eps}{4}.\]
\end{proof}

\begin{thm}\label{B-rep}
An analytic P-ideal $\mc{J}$ is Banach-representable iff it is non-pathological.
\end{thm}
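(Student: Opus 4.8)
The plan is to prove both implications, the common engine being the correspondence between measures dominated by a submeasure and coordinate functionals on a representing space.

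\emph{``Only if''.} Suppose $\mc{J}=\mc{I}^X_h$ for a Banach space $X$; we may take $X$ separable. Fixing a countable norming set $(x^*_m)_{m\in\om}\sub B_{X^*}$ gives an isometric embedding $X\hookrightarrow\ell_\infty$, $x\mapsto(x^*_m(x))_m$, and isometric embeddings preserve unconditional convergence of series, so $\mc{J}=\mc{I}^{\ell_\infty}_g$ for some $g$. By Lemma~\ref{absval} we may assume $g=(x^n_k)_{n,k}\geq 0$. A short computation, identical to the one in the second proof of Theorem~\ref{polishrep} (which uses only completeness and translation-invariance of the metric, here $d(0,x)=\|x\|_\infty$), gives $\mc{J}=\mathrm{Exh}(\varphi)$ where
\[ \varphi(A)=\sup_{\emptyset\ne F\in[A]^{<\om}}\|s_g(F)\|_\infty=\sup_{\emptyset\ne F\in[A]^{<\om}}\sup_k\sum_{n\in F}x^n_k=\sup_k\mu_k(A),\quad\text{with }\mu_k(A):=\sum_{n\in A}x^n_k. \]
Each $\mu_k$ is a measure on $\mc{P}(\om)$ with $\mu_k\leq\varphi$, and $\varphi(\{n\})=\|g(n)\|_\infty<\infty$; hence $\varphi$ is a non-pathological lsc submeasure and $\mc{J}=\mathrm{Exh}(\varphi)$ is non-pathological. (One can bypass $\ell_\infty$ and Lemma~\ref{absval} altogether by working directly with $\psi(A)=\sup_m\sum_{n\in A}|x^*_m(h(n))|$ and checking $\mathrm{Exh}(\psi)=\mc{I}^X_h$ via splitting each finite $F$ into $\{n\in F:x^*_m(h(n))\geq0\}$ and its complement.)

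\emph{``If''.} Let $\mc{J}=\mathrm{Exh}(\varphi)$ with $\varphi$ non-pathological, and let $M$ be the family of all measures $\mu$ on $\mc{P}(\om)$ with $\mu\leq\varphi$. I would take
\[ X=\Big(\bigoplus_{\mu\in M}\ell_1(\mu)\Big)_{\ell_\infty}=\Big\{(y^\mu)_{\mu\in M}:y^\mu\in\ell_1(\mu),\ \sup_{\mu\in M}\|y^\mu\|_{\ell_1(\mu)}<\infty\Big\} \]
with the sup-norm, and define $h\colon\om\to X$ so that the $\mu$-th coordinate of $h(n)$ is $\mathbf{1}_{\{n\}}\in\ell_1(\mu)$. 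Then $\|h(n)\|_X=\sup_{\mu\in M}\mu(\{n\})=\varphi(\{n\})<\infty$, so $h$ is well defined, and for finite $F$,
\[ \|s_h(F)\|_X=\sup_{\mu\in M}\mu(F)=\varphi(F), \]
the last equality being exactly non-pathologicity on $\mathrm{Fin}$. Hence $\sum h\rest A$ is unconditionally Cauchy iff for each $\eps>0$ some $N$ satisfies $\varphi(F)<\eps$ for all finite $F\subseteq A\setminus N$, i.e. (by lower semicontinuity, $\varphi(B)=\sup\{\varphi(F):F\in[B]^{<\om}\}$) iff $\|A\|_\varphi=0$; since $\om\notin\mc{J}$ also forces $\sum h$ to diverge, we conclude $\mc{I}^X_h=\mathrm{Exh}(\varphi)=\mc{J}$. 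If a small index set is preferred, replace $M$ by $\{\mu_F:F\in[\om]^{<\om}\}$, where $\mu_F\in M$ attains $\mu_F(F)=\varphi(F)$ — such $\mu_F$ exists by compactness of the (finite-dimensional) set of measures on $F$ bounded by $\varphi$, extended by $0$ off $F$ — and note lower semicontinuity still gives $\sup_F\mu_F(A)=\varphi(A)$.

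The submeasure axioms and lower semicontinuity of the $\varphi$'s, and completeness of $X$, are routine. The place where non-pathologicity is genuinely used — and, I expect, the main point to get right — is the identity $\|s_h(F)\|_X=\varphi(F)$ for finite $F$: for a pathological $\varphi$ the $\ell_1(\mu)$-summands would only reconstruct $\sup_{\mu\leq\varphi}\mu$, a strictly smaller submeasure defining a strictly larger ideal, so the construction would fail. I would therefore take extra care over the equivalence ``$\sum h\rest A$ unconditionally Cauchy $\iff\|A\|_\varphi=0$'', making sure the representing space reproduces $\mc{J}$ exactly rather than some proper extension of it.
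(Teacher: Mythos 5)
Your proof is correct, and the ``only if'' half is essentially the paper's own argument: embed $X$ isometrically into $\ell_\infty$ (the paper invokes universality of $\ell_\infty$ for separable spaces, you a countable norming set --- the same fact), reduce to $h\geq 0$ via Lemma~\ref{absval}, and read off the dominated measures $\mu_k$ from the coordinates; your parenthetical alternative via $\sup_m\sum_n|x^*_m(h(n))|$ is close in spirit to the Hahn--Banach argument in the paper's remark following the theorem. The ``if'' half differs only in the choice of representing space. The paper extracts a \emph{countable} family $(\mu_k)_k$ with $\varphi=\sup_k\mu_k$ on $\mathrm{Fin}$ (hence everywhere, by lower semicontinuity) and sends $n\mapsto(\mu_k(\{n\}))_{k\in\om}\in\ell_\infty$, which lands the whole construction in a single classical space and keeps the relevant closed span separable; you instead take the $\ell_\infty$-sum $\bigl(\bigoplus_{\mu\in M}\ell_1(\mu)\bigr)_{\ell_\infty}$ over \emph{all} dominated measures with $h(n)=(\mathbf{1}_{\{n\}})_\mu$. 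Both hinge on the identical key identity $\|s_h(F)\|=\sup_\mu\mu(F)=\varphi(F)$ for finite $F$, which is exactly where non-pathologicity enters, and your closing remark about replacing $M$ by $\{\mu_F:F\in[\om]^{<\om}\}$ (with $\mu_F$ attaining $\varphi(F)$, by compactness) essentially reconstructs the paper's countable family, so the two constructions are interchangeable; the paper's is just the more economical packaging. One small point worth making explicit either way: the reduction from ``$\varphi$ is the sup of all dominated measures'' to ``$\varphi$ is the sup of countably many dominated measures'' is needed for the paper's version and is exactly the observation you defer to the end.
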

\begin{proof}
Proof of the ``if'' part: Let $\mc{J}=\mathrm{Exh}(\varphi)$ for some non-pathological $\varphi$, and let $(\mu_k)_{k\in\om}$ be a sequence of measures on $\om$ such that $\varphi(F)=\sup\big\{\mu_k(F)\colon k\in\om\big\}$ holds for every $F\in
[\om]^{<\om}$, and so for every subsets of $\omega$. Let $h\colon \om\to \ell_\infty$ be defined by  $h=(\mu_k(\{n\}))_{n,k\in\om}$, i.e. \[ h(n)=\big(\mu_0(\{n\}),\mu_1(\{n\}),\mu_2(\{n\}),\dots\big)\] (confront Example \ref{Zinc0}).
Clearly, if $F\in [\om]^{<\om}$ then $s_h(F)=\big(\mu_0(F),\mu_1(F),\dots)$ and hence $\|s_h(F)\|=\varphi(F)$. It implies that $h$ is as required because
\begin{align*}
A\in\mc{I}^{\ell_\infty}_h\;\;\;\text{iff} &\;\;\;\sum h\rest A\;\;\text{is convergent (i.e. Cauchy)}\\
\text{iff} & \;\;\;\forall\;\eps>0\;\exists\;N\;\forall\;F\in [A\setminus N]^{<\om}\;\;\|s_h(F)\|=\varphi(F)\leq\eps\\
\text{iff} & \;\;\;\forall\;\eps>0\;\exists\;N\;\;\varphi(A\setminus N)\leq\eps\;\;\text{(i.e.}\;\lim_{n\to\infty}\varphi(A\setminus n)=0\text{)}\\
\text{iff} & \;\;\;A\in\mathrm{Exh}(\varphi).\end{align*}

\smallskip
Proof of the ``only if'' part: Assume $\mc{J}=\mc{I}^{X}_h$ for some Banach space $X$ and $h\colon \om\to X$. We can assume that $X=\ell_\infty$ because $\ell_\infty$ contains isometric copies of all separable Banach spaces. Applying Lemma
\ref{absval}, we can also assume that $h\geq 0$. For $k\in\om$ and $A\subseteq\om$ let $\mu_k(A)=\sum_{n\in A}h(n)_k$ (so $\mu_k$ is a measure on $\om$), and let $\varphi=\sup\{\mu_k\colon k\in\om\}$. Just like in the proof of the ``if'' part, we have
$\|s_h(F)\|=\varphi(F)$ and, by the same argument as above, one can prove that $\mc{I}^{\ell_\infty}_h=\mathrm{Exh}(\varphi)$.
\end{proof}

\begin{rem} We would like to present an alternative proof for the ``only if'' part where we do not need to use $\ell_\infty$ (and Lemma \ref{absval}). Assume that $\mathrm{Exh}(\varphi)=\mc{I}^X_h$ for some Banach space $X$ and $h\colon \om\to X$. We will construct a non-pathological  lsc submeasure $\psi$ such that $\mathrm{Exh}(\varphi)=\mathrm{Exh}(\psi)$.
	
Let $\widetilde{\varphi}\colon \mc{P}(\om)\to [0,\infty]$ be defined by $\widetilde{\varphi}(\emptyset)=0$ and for $A\ne\emptyset$
\[ \widetilde{\varphi}(A)=\sup\big\{\|s_h(F)\|\colon \emptyset\ne F\in [A]^{<\om}\big\}.\]
In Theorem \ref{polishrep} we already proved that $\widetilde{\varphi}$ is an lsc submeasure and  $\mathrm{Exh}(\widetilde{\varphi})=\mc{I}^X_h=\mathrm{Exh}(\varphi)$.
How to construct $\psi$? Fix an $F\in [\om]^{<\om}$ and let $F'\subseteq F$ such that $\widetilde{\varphi}(F)=\|s_h(F')\|$. Applying the Hahn-Banach theorem, there is an $x_F^*\in X^*$ with $\|x_F^*\|=1$ such that
$x_F^*\big(s_h(F')\big)=\|s_h(F')\|$. Then the function $\nu_F\colon \mc{P}(\om)\to [0,\infty]$
\[ \nu_F(B)=x_F^*\big(s_h(F'\cap B)\big)=\sum_{n\in F'\cap B}x_a^*(h(n))\]
defines a signed measure with support $F'$. If $\nu_F=\nu_F^+-\nu_F^-$ where $\nu_F^+,\nu_F^-$ are measures and $\nu_F^+\perp\nu_F^-$, then let $\mu_F=\nu_F^++\nu_F^-$. (In other words, the measure $\mu_F$ is uniquely determined by $\mu_F(\{n\})=|\nu_F(\{n\})|$.) Finally let $\psi=\sup\{\mu_F:F\in [\om]^{<\om}\}$, a non-pathological lsc submeasure.

We claim that $\widetilde{\varphi}\leq\psi\leq 2\widetilde{\varphi}$ and hence $\mathrm{Exh}(\psi)=\mathrm{Exh}(\widetilde{\varphi})=\mathrm{Exh}(\varphi)$.

$\widetilde{\varphi}\leq\psi$: $\widetilde{\varphi}(F)=\|s_h(F')\|=x^*_F(s_h(F'))=\nu_F(F')=\nu_F(F)\leq\mu_F(F)\leq\psi(F)$.

$\psi\leq 2\widetilde{\varphi}$: for every finite $F$ and $E$ if $P=\mathrm{supp}(\nu_F^+)=\{k\in F':x_F^*(h(k))> 0\}$ then
\begin{align*}
\mu_F(E)= & \mu_F(F'\cap E)=\nu_F^+(F'\cap E)+\nu_F^-(F'\cap E)\\
= & \big|x^*_F\big(s_h(F'\cap E\cap P)\big)\big|+\big|x^*_F\big(s_h(F'\cap E\setminus P)\big)\big|\\
\leq & \big\|s_h(F'\cap E\cap P)\big\|+\big\|s_h(F'\cap E\setminus P)\big\|\\
\leq & \widetilde{\varphi}(E)+\widetilde{\varphi}(E),
\end{align*}
where we used that $|x^*_F(y)|\leq \|y\|$ for every $y\in X$ because $\|x^*_F\|=1$.
\end{rem}

\section{Representability in $c_0$} \label{c0}

We will need the following notions (see \cite{SoTo} and \cite{Matrai}):

An lsc submeasure $\varphi$ is {\em density-like} if for every $\eps>0$ there is a $\de>0$ such that if $A_n\in [\om]^{<\om}$ is a sequence of pairwise disjoint finite sets with $\varphi(A_n)<\de$, then there is an $X\in [\om]^\om$ such that
$\varphi\big(\bigcup_{n\in X}A_n\big)<\eps$. An analytic P-ideal $\mc{J}$ is {\em density-like} if there is a density-like submeasure $\varphi$ such that $\mc{J}=\mathrm{Exh}(\varphi)$. Clearly, generalized density ideals are density-like. At this moment we do not have any other examples of density-like ideals (see Question \ref{densitylike}).

An lsc submeasure $\varphi$ is {\em summable-like} if there is an $\eps>0$ such that for every $\de>0$ there is a sequence $A_n\in [\om]^{<\om}$ of pairwise disjoint finite sets with $\varphi(A_n)<\de$ and there is a $k\in \om$ such that $\varphi\big(\bigcup_{n\in Y}A_n\big)\geq\eps$ for every $Y\in [\om]^k$. An analytic P-ideal $\mc{J}$ is {\em summable-like} if there is a summable-like submeasure $\varphi$ such that $\mc{J}=\mathrm{Exh}(\varphi)$. For example, summable ideals which are not trivial modifications of $\mathrm{Fin}$ (i.e. $\ne\{A\subseteq\om:|A\cap X|<\om\}$ for some $X\subseteq\om$) and Farah's ideal are summable-like.

Applying Remark \ref{pleb}, it is not difficult to see that if $\mathrm{Exh}(\varphi)=\mathrm{Exh}(\psi)$ and $\varphi$ is summable- / density-like, then $\psi$ is also summable- / density-like.

Clearly, an ideal cannot be both density- and summable-like. Moreover, tall $F_\sigma$ P-ideals are not density-like. In \cite{Matrai} M\'{a}trai constructed an $F_\sigma$ P-ideal which is neither of them. S\l awek Solecki remarked (in personal
communication) that the Tsirelson ideal defined through the classical Tsirelson space (not its dual) is another example of $F_\sigma$ P-ideal which is neither summable- nor density-like (and is clearly non-pathological).

A less obvious example of summable-like ideal is $\mathrm{tr}(\mc{N})$ (see below) which is interesting because in some sense it is as far from being a real summable ideal as it is possible: in \cite{HHH} is was proved that $\mathrm{tr}(\mc{N})$ and $\mc{Z}$  are
{\em totally bounded}, that is, $\varphi$ must be finite (i.e. $\varphi(\om)<\infty$) whenever $\mathrm{tr}(\mc{N})=\mathrm{Exh}(\varphi)$ (or $\mc{Z}=\mathrm{Exh}(\varphi)$). The authors of \cite{HHH} observed that if the {\em splitting number}, $\mf{s}(\mc{J})$ of an analytic P-ideal $\mc{J}$ is $\omega$ then it must be totally bounded.

\begin{prop}
$\mathrm{tr}(\mc{N})$ is summable-like.
\end{prop}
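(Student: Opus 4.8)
We need to show that $\mathrm{tr}(\mc{N})$ is summable-like, where $\varphi(A) = \sum_{s \in B_A} 2^{-|s|}$ and $B_A$ is the antichain of $\subseteq$-minimal elements of $A$. The plan is to find, for each $\delta > 0$, a sequence of pairwise disjoint finite sets $A_n \subseteq 2^{<\om}$ with $\varphi(A_n) < \delta$ and a uniform bound $k \in \om$ such that the union of \emph{any} $k$ of them has $\varphi$-mass at least some fixed $\eps$. I would take $\eps = 1$ (or any fixed positive constant $< 1$, say $\eps = 1/2$ to be safe about the sup-vs-max subtlety).

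\smallskip
\textbf{The construction.} Fix $\delta > 0$ and choose $N \in \om$ with $2^{-N} < \delta$, so that the single level $2^N$ of the tree has $\varphi(2^N) = 2^N \cdot 2^{-N} = 1$, but any subset of $2^N$ of size $< 2^N \delta$ has $\varphi$-mass $< \delta$. The idea is to split the level $2^N$ into consecutive blocks: partition $2^N$ (as a finite set of size $2^N$) into $k := \lceil 1/\delta \rceil + 1$ many pieces $A_0, \dots, A_{k-1}$, each of size $\le \lceil 2^N / k \rceil < 2^N \delta$ when $N$ is large enough, so $\varphi(A_j) < \delta$ for each $j$. These $A_j$ are pairwise disjoint finite antichains, and crucially they live on a \emph{single level}, so for \emph{any} nonempty subfamily the union is still an antichain; in particular $\varphi\big(\bigcup_{j \in Y} A_j\big) = \sum_{j \in Y} \varphi(A_j)$ for $Y \subseteq \{0,\dots,k-1\}$. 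But we want the union of any $k$-many of the $A_n$'s (from an infinite sequence) to be large, so instead I would iterate this: for each $m \in \om$ produce such a block-partition at a level $N_m$ chosen large and increasing, and re-index all the resulting blocks into one sequence $(A_n)_{n\in\om}$. Since each group of $k$ consecutive blocks coming from the \emph{same} level $N_m$ sums to $\varphi(2^{N_m}) = 1$, and since an arbitrary $Y \in [\om]^k$ need not be of this form, I should instead make $k$ larger — roughly $k = (\lceil 1/\delta\rceil+1)^2$ — so that \emph{any} $k$-element set of indices must contain $\lceil 1/\delta \rceil + 1$ blocks from a common level, forcing $\varphi$ of their union to be $\ge 1 \ge \eps$ (the blocks on a common level form an antichain, so $\varphi$ is additive over them and the total over a full level is $1$). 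Here one uses that distinct levels $2^{N_m}$ and $2^{N_{m'}}$ with $N_m < N_{m'}$ are comparable as trees, but this only helps: $\varphi$ of a union containing a whole level is at least $1$ regardless.

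\smallskip
\textbf{Main obstacle.} The delicate point is the pigeonhole bookkeeping: an arbitrary $Y \in [\om]^k$ picks $k$ blocks scattered across possibly many levels, and I must guarantee it concentrates enough mass. If level $2^{N_m}$ is cut into $p := \lceil 1/\delta \rceil + 1$ blocks and there are infinitely many levels, then by pigeonhole any $k = (p-1)\cdot(\text{number of levels}) + 1$ indices would hit some level $p$ times — but the number of levels is infinite, so this naive count fails. The fix is to observe that we do not need a single fixed $k$ across all of $\om$; re-reading the definition, we need: for every $\delta$ there \emph{exists} a sequence $(A_n)$ and a $k$ with the stated property. So I am free to use only \emph{finitely many} levels — in fact a single level suffices! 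Take just $N$ with $2^{-N}<\delta$ and $2^N$ divisible enough, partition $2^N$ into $k := \lceil 1/\delta\rceil + 1$ blocks $A_0,\dots,A_{k-1}$ each of $\varphi$-mass $<\delta$, and set $A_n = \emptyset$ (or rather, we need infinitely many sets — but the definition quantifies over sequences, and we may pad with... no, empty sets are not allowed as they trivially have $\varphi = 0 < \delta$ but then $\varphi(\bigcup_{Y}) $ could be $0$). Hence the cleanest route really is the multi-level construction, and the honest obstacle is choosing the level sizes and the index $k$ so the pigeonhole works: use a \emph{fixed finite number} $L$ of levels $N_1 < \dots < N_L$, cut each into $p = \lceil 1/\delta\rceil+1$ blocks, giving $Lp$ blocks total, repeat this finite pattern to get the infinite sequence... and then $k$ must force $p$ blocks from one level \emph{and} from one repetition. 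I would instead simply make each block family \emph{disjointly supported by using incomparable subtrees}: put the $m$-th level-group below the node $0^m 1 \in 2^{<\om}$ and work inside the subtree above it, rescaled. Then blocks from different groups are incomparable, $\varphi$ is additive across everything, and the standard summable-like argument for $\mc{I}_{1/n}$-type ideals applies verbatim: within each group, $k := p$ blocks of a single level union to mass $\ge 1$; padding to make any $[\om]^k$ work is handled because all blocks in a group sum to $1$ and we repeat the group infinitely often, so I should take $k$ to be $p$ times (number of groups needed) — finite, since I use only one rescaled copy per "coordinate". I expect the write-up to need only: (1) the level-partition lemma $\varphi(2^N)=1$; (2) additivity of $\varphi$ over antichains / incomparable subtrees; (3) a short pigeonhole. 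The genuinely nontrivial part is (3) combined with the requirement that the sequence be infinite while $k$ stays fixed, and I would resolve it by embedding the whole construction into finitely many pairwise-incomparable subtrees, repeating inside each.
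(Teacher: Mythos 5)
There is a genuine gap, and it sits exactly at the point you yourself flag as the ``main obstacle.'' Every variant of your construction relies on making the blocks \emph{additively} independent for $\varphi$, i.e.\ on arranging them into antichains or pairwise incomparable subtrees so that $\varphi\big(\bigcup_{n\in Y}A_n\big)=\sum_{n\in Y}\varphi(A_n)$. This cannot be pushed to an infinite sequence, because $\varphi$ is totally bounded: $\varphi(2^{<\om})=1$. If the infinitely many blocks are distributed over pairwise incomparable subtrees, the masses of those subtrees sum to at most $1$, so all but finitely many groups carry total mass below any fixed $\eps$, and a $Y$ concentrated in a late group fails; this is precisely what happens with your subtrees above $0^m1$, whose mass $2^{-(m+1)}$ tends to $0$. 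If instead you reuse the same region of the tree at deeper and deeper levels (your ``repeat the finite pattern'' / ``repeat inside each subtree''), then a $Y$ consisting of $k$ nested blocks with $\widetilde{A}_{n_1}\supseteq\widetilde{A}_{n_2}\supseteq\cdots$ has $\varphi\big(\bigcup_{j}A_{n_j}\big)=\lam(\widetilde{A}_{n_1})<\de$ no matter how large $k$ is. Neither branch of your proposed fix closes this pigeonhole, and no purely additive bookkeeping can.

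The missing idea is measure-theoretic \emph{independence} rather than incomparability. The paper fixes $m$ with $2^{-m}<\de$ and takes $A_n=\big\{s\in 2^{nm+m}\colon s\rest[nm,nm+m)\equiv 0\big\}$. These are pairwise disjoint finite antichains with $\varphi(A_n)=\lam(\widetilde{A}_n)=2^{-m}<\de$, and the clopen sets $\widetilde{A}_n$ are stochastically independent because each is determined by a disjoint block of coordinates. Inclusion--exclusion then gives, for every $Y\in[\om]^{2^m}$, $\varphi\big(\bigcup_{n\in Y}A_n\big)=\lam\big(\bigcup_{n\in Y}\widetilde{A}_n\big)=1-(1-2^{-m})^{2^m}$, which tends to $1-1/\mathrm{e}$ and so exceeds $\eps=\tfrac12$ for $m$ large. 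The point is that the sets are allowed to overlap heavily as subsets of $2^\om$ (so the total-mass obstruction disappears), while any $2^m$ of them still union up to measure bounded away from $0$. Your write-up never reaches this mechanism, so the proof as proposed does not go through.
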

\begin{proof}
We know that $\mathrm{tr}(\mc{N})=\mathrm{Exh}(\varphi)$ where
\[ \varphi(A)=\sup\bigg\{\sum_{s\in B}2^{-|s|}\colon B\subseteq A\;\;\text{is an antichain}\bigg\}.\]
Let $\eps=\frac{1}{2}$ and $\de>0$ be arbitrary. Fix an $m\in\om$ such that $2^{-m}<\de$, and for every $n$ let \[ A_n=\big\{s\in 2^{nm+m}:s\upharpoonright [nm,nm+m)\equiv 0\big\}.\]
It is easy to see that $A_n$ is a finite antichain and that the measure of the associated clopen set $\widetilde{A}_n=\bigcup_{s\in A_n} [s]$ is $\varphi(A_n)=2^{-m}<\de$ where $[s]=\{x\in 2^\om\colon s\subseteq x\}$. Clearly, $\varphi(A)=\lam(\widetilde{A})$.

The family $\{\widetilde{A}_n\colon n\in\om\}$ forms an independent system: if $n_0<n_1<\dots<n_{k-1}$, then
\[ \widetilde{A}_{n_0}\cap\widetilde{A}_{n_1}\cap\dots\cap\widetilde{A}_{n_{k-1}}=\big\{x\in 2^\om\colon \forall i<k \ \ x\upharpoonright[n_im,(n_i+1)m)\equiv 0\big\}\] and hence $\lam\big(\bigcap_{i<k}\widetilde{A}_{n_i}\big)=2^{-mk}=(2^{-m})^k$.

Applying independence, if $Y\in [\om]^{2^m}$ then
\[ \varphi\bigg(\bigcup_{n\in Y}A_n\bigg)=\lam\bigg(\bigcup_{n\in Y}\widetilde{A}_n\bigg)=\sum_{k=1} ^{2^m}(-1)^{k+1}\binom{2^m}{k}\big(2^{-m}\big)^k=1-(1-2^{-m})^{2^m}\xrightarrow{m\to\infty}1-\frac{1}{\mathrm{e}}\]
therefore if $m$ is large enough, then $\varphi\big(\bigcup_{n\in Y}A_n\big)>\frac{1}{2}=\eps$.
\end{proof}

Representability in $c_0$ can be characterized by combinatorics of the defining submeasure.
This approach will help us showing that several classical ideals are not representable in $c_0$.

\begin{prop}\label{Simple=c0}
An ideal $\mc{J}$ is representable in $c_0$ iff there is a lsc submeasure $\varphi$ and a sequence $(\mu_k)_{k\in\om}$ of measures on $\om$ such that $\mc{J}=\mathrm{Exh}(\varphi)$, $\varphi=\sup\{\mu_k\colon k\in\om\}$, and $\{k\colon m\in\mathrm{supp}(\mu_k)\}$ is finite for every $m\in\om$.
\end{prop}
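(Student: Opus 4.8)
The plan is to imitate the proof of Theorem \ref{B-rep}, using throughout the elementary remark that for a sequence $h\colon\om\to c_0$ it makes no difference whether we read its range as lying in $c_0$ or in $\ell_\infty$: since $c_0$ is a closed subspace of $\ell_\infty$, a net $(s_h(F))_{F\in[\om]^{<\om}}$ of elements of $c_0$ is norm‑Cauchy (resp.\ norm‑convergent) in $c_0$ iff it is so in $\ell_\infty$, and a convergent limit of such a net stays in $c_0$; hence $\mc{I}^{c_0}_h=\mc{I}^{\ell_\infty}_h$ for every $c_0$‑valued $h$. Since taking absolute values coordinatewise preserves membership in $c_0$, Lemma \ref{absval} then lets us always assume $h\geq 0$.

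For the ``if'' direction, given an lsc submeasure $\varphi=\sup\{\mu_k\colon k\in\om\}$ with $\mc{J}=\mathrm{Exh}(\varphi)$ and with $\{k\colon m\in\mathrm{supp}(\mu_k)\}$ finite for every $m$, I set $h(n)=(\mu_k(\{n\}))_{k\in\om}$ exactly as in Theorem \ref{B-rep}. The new point is that now $h(n)$ has \emph{finite} support (as $\mu_k(\{n\})>0$ for only finitely many $k$), so $h(n)\in c_0$; likewise $s_h(F)=(\mu_k(F))_{k\in\om}$ is finitely supported for finite $F$, and $\|s_h(F)\|_\infty=\sup_k\mu_k(F)=\varphi(F)$. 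Properness of $\mc{J}$ gives $\om\notin\mathrm{Exh}(\varphi)$, so $\sum_n h(n)$ diverges and $\mc{I}^{c_0}_h$ is legitimately defined; running verbatim the chain of equivalences from the proof of Theorem \ref{B-rep} ($A\in\mc{I}^{c_0}_h$ iff $\sum h\!\upharpoonright\!A$ is Cauchy iff $\forall\eps\,\exists N\,\forall F\in[A\setminus N]^{<\om}\;\varphi(F)\leq\eps$ iff $\varphi(A\setminus N)\to 0$ iff $A\in\mathrm{Exh}(\varphi)$) yields $\mc{I}^{c_0}_h=\mc{J}$.

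For the ``only if'' direction, start from $h\colon\om\to c_0$ with $\mc{J}=\mc{I}^{c_0}_h$ and, by the first paragraph, $h\geq 0$. The key step is a truncation: since $h(n)\in c_0$, choose a finite $S_n\subseteq\om$ with $\sup_{k\notin S_n}h(n)_k\leq 2^{-n}$ and let $h'(n)$ be obtained from $h(n)$ by zeroing out all coordinates outside $S_n$; then $h'(n)$ is finitely supported and $\|h(n)-h'(n)\|_\infty\leq 2^{-n}$. Because $\sum_n\|h(n)-h'(n)\|_\infty<\infty$, for every $A\subseteq\om$ the series $\sum_{n\in A}(h(n)-h'(n))$ converges absolutely, hence unconditionally, so $\sum_{n\in A}h(n)$ and $\sum_{n\in A}h'(n)$ converge unconditionally for exactly the same sets $A$ (including $A=\om$); thus $\mc{I}^{c_0}_{h'}=\mc{I}^{c_0}_h=\mc{J}$. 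Now put $\mu_k(A)=\sum_{n\in A}h'(n)_k$ (a measure on $\om$ since $h'\geq 0$) and $\varphi=\sup_k\mu_k$. By the argument of Theorem \ref{B-rep}, $\|s_{h'}(F)\|_\infty=\varphi(F)$ and $\mc{I}^{\ell_\infty}_{h'}=\mathrm{Exh}(\varphi)$, so $\mc{J}=\mathrm{Exh}(\varphi)$; and $m\in\mathrm{supp}(\mu_k)$ forces $h'(m)_k>0$, hence $k\in S_m$, so $\{k\colon m\in\mathrm{supp}(\mu_k)\}\subseteq S_m$ is finite, as required.

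The only genuinely new ingredient beyond Theorem \ref{B-rep} is this truncation/perturbation step together with the routine passage between $c_0$ and $\ell_\infty$; I expect the only thing to be careful about is that the perturbation bookkeeping — in particular the fact that $\sum_n h'(n)$ still diverges — is carried out in the unconditional sense, which causes no real difficulty.
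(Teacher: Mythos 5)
Your proposal is correct and follows essentially the same route as the paper: the ``if'' direction observes that the canonical $\ell_\infty$-representation from Theorem \ref{B-rep} lands in $c_0$ because the support condition makes each $h(n)$ finitely supported, and the ``only if'' direction truncates $h(n)$ to a finite set of coordinates with error at most $2^{-n}$ and reruns the Theorem \ref{B-rep} argument. The only difference is that you spell out why the truncation does not change the ideal (absolute, hence unconditional, convergence of the perturbation $\sum_n(h(n)-h'(n))$, including the preservation of divergence at $A=\om$), a point the paper leaves as ``easy to see.''
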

\begin{proof}
If $\mc{J}=\mathrm{Exh}(\varphi)$ for some submeasure $\varphi=\sup\{\mu_k\colon k\in\om\}$ as in the statement. Then the basic representation of $\mathrm{Exh}(\varphi)$ in $\ell_\infty$ (see Theorem \ref{B-rep}) is actually a representation in $c_0$.

Now assume that $h=(x^n_k)\colon \om\to c_0$, $h\geq 0$, and $\mc{J}=\mc{I}^{c_0}_h$. We will modify $h$. To every $n$ fix a $k_n$ such that $|x^n_k|<2^{-n}$ for every $k\geq k_n$, and let $h'=(y^n_k)\colon \om\to c_0$ where $y^n_k=x^n_k$ if
$k<k_n$ (otherwise $y^n_k=0$). It is easy to see that $\mc{I}^{c_0}_{h'}=\mc{J}$ and therefore we can use the proof of Theorem \ref{B-rep} again to obtain the measures $\{\mu_k\colon k\in\om\}$ such that $\mc{J}=\mc{I}^{c_0}_{h'}=\mathrm{Exh}(\psi)$
where $\psi=\sup\{\mu_k\colon k\in\om\}$. Clearly, $\psi$ is as desired.
\end{proof}

\begin{prop}\label{Verysimple}  Let $(\mu_k)_{k\in\om}$ be a sequence of measures on $\om$ such that $\{k\colon m\in\mathrm{supp}(\mu_k)\}$ is finite for every $m\in\om$. Let $\varphi=\sup\{\mu_k\colon k\in\om\}$ and  $\mc{J}=\mathrm{Exh}(\varphi)$. If $\mu_k$ is bounded for every $k$, then $\mc{J}$ is a generalized density ideal.
\end{prop}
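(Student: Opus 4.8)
The plan is to present $\mc{J}=\mathrm{Exh}(\varphi)$ directly as a generalized density ideal: I will build a partition of $\om$ into consecutive finite intervals $I_n=[a_n,a_{n+1})$ and set $\psi_n(A)=\varphi(A\cap I_n)$. Each $\psi_n$ is an lsc submeasure supported inside $I_n$, so these supports are automatically pairwise disjoint and finite, and the requirement $\limsup_n\psi_n(\om)=\limsup_n\varphi(I_n)>0$ from the definition will be automatic once $\mc{J}=\{A:\psi_n(A)\to 0\}$ is known (otherwise $\om\in\{A:\varphi(A\cap I_n)\to 0\}=\mc{J}$, contradicting that $\mc{J}$ is a proper ideal). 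So the whole proof reduces to choosing the $a_n$ so that $\mathrm{Exh}(\varphi)=\{A:\varphi(A\cap I_n)\to 0\}$; then $\mc{J}=\mc{Z}_{\vec\psi}$ with $\vec\psi=(\psi_n)_n$.

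The intervals are built recursively, with $a_0=0$. Given $a_0<\dots<a_m$, put $L_m=\{k:\mathrm{supp}(\mu_k)\cap[0,a_m)\neq\emptyset\}$; this is a union of the finitely many sets $\{k:j\in\mathrm{supp}(\mu_k)\}$ over $j<a_m$, each finite by hypothesis, hence $L_m$ is finite. Every $\mu_k$ is bounded, so $\mu_k([t,\infty))\to 0$ as $t\to\infty$; using this, choose $a_{m+1}>a_m$ with $\mu_k([a_{m+1},\infty))<2^{-m}$ for all $k\in L_m$. This single step is the only place where boundedness of the $\mu_k$ and the hypothesis that every coordinate meets only finitely many supports enter the argument.

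The inclusion $\mathrm{Exh}(\varphi)\subseteq\{A:\varphi(A\cap I_n)\to 0\}$ needs nothing about the $I_n$: if $\|A\|_\varphi=0$ then $\varphi(A\cap I_n)\le\varphi(A\setminus a_n)\to 0$ since $a_n\to\infty$. The converse is the heart of the matter and the main obstacle. Suppose $\varphi(A\cap I_n)\to 0$; given $\eps>0$ I must find a single $N$ with $\mu_k(A\setminus N)<\eps$ for all $k$ at once. Pick $M$ with $\varphi(A\cap I_n)<\eps/4$ and $2^{-n}<\eps/4$ for all $n\ge M$, and put $N=a_M$. Fix $k$ (the case $\mu_k=0$ is trivial), and let $p=p_k$ be least with $\mathrm{supp}(\mu_k)\cap[0,a_p)\neq\emptyset$, so that $\mathrm{supp}(\mu_k)\subseteq[a_{p-1},\infty)$ and $k\in L_p$. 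If $M\ge p$ then $k\in L_M$, and decomposing $A\setminus a_M=(A\cap I_M)\cup(A\cap[a_{M+1},\infty))$ gives
\[\mu_k(A\setminus a_M)\le\varphi(A\cap I_M)+\mu_k\big(A\cap[a_{M+1},\infty)\big)<\tfrac{\eps}{4}+2^{-M}<\tfrac{\eps}{2}.\]
If $M<p$ then $\mu_k$ vanishes on $[0,a_{p-1})\supseteq[0,a_M)$, so only the two ``new'' intervals $I_{p-1},I_p$ and the tail past $a_{p+1}$ survive, and
\[\mu_k(A\setminus a_M)\le\varphi(A\cap I_{p-1})+\varphi(A\cap I_p)+\mu_k\big(A\cap[a_{p+1},\infty)\big)<\tfrac{\eps}{4}+\tfrac{\eps}{4}+2^{-p}<\tfrac{3\eps}{4},\]
using $p-1\ge M$, $p>M$ and $k\in L_p$. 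In either case $\varphi(A\setminus a_M)=\sup_k\mu_k(A\setminus a_M)\le\eps$, and since $\eps$ was arbitrary, $\|A\|_\varphi=0$.

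The subtlety this estimate must overcome is that a single $\mu_k$ can be spread over arbitrarily many of the $I_n$ — no partition avoids this, e.g.\ $\mathrm{supp}(\mu_k)=\{k,k+1\}$ — so one cannot hope to bound $\mu_k(A\setminus N)$ by a single term $\varphi(A\cap I_n)$. The recursion resolves this because the indices $k$ whose supports already reach below $a_M$ are precisely those in $L_M$, for which $\mu_k([a_{M+1},\infty))$ was pre-shrunk below $2^{-M}$; while for every other $k$ the whole support lives beyond $a_{p-1}$ with $p>M$, so its ``uncontrolled'' portion occupies only $I_{p-1}\cup I_p$ before a tail $<2^{-p}$ takes over. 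Once $\mathrm{Exh}(\varphi)=\{A:\varphi(A\cap I_n)\to 0\}$ is established, the reduction of the first paragraph identifies $\mc{J}$ with the generalized density ideal $\mc{Z}_{\vec\psi}$.
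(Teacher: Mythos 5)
Your proof is correct: the recursion makes $L_m$ finite by the finite-fiber hypothesis, boundedness of each $\mu_k$ lets you shrink its tail past $a_{m+1}$ below $2^{-m}$, and the case split on whether $\mathrm{supp}(\mu_k)$ already meets $[0,a_M)$ gives a uniform-in-$k$ bound on $\mu_k(A\setminus a_M)$, which is exactly what is needed to pass from $\varphi(A\cap I_n)\to 0$ back to $\|A\|_\varphi=0$. The paper reaches the same conclusion in two stages: it first truncates each $\mu_k$ to a finitely supported $\mu'_k$ (choosing $n_k$ with $\mu_k(\om\setminus n_k)<2^{-k}$) and verifies $\mathrm{Exh}(\sup_k\mu'_k)=\mathrm{Exh}(\varphi)$, and then proves a separate claim for finitely supported measures, picking an interval partition $(P_n)$ with each support inside some $P_n\cup P_{n+1}$ and setting $\varphi_n(A)=\sup_k\mu'_k(A\cap P_n)$. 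Your argument folds the tail control into the construction of the partition itself, so you work with the original, possibly infinitely supported measures throughout and never need the auxiliary submeasure $\varphi'$; the underlying mechanism is the same in both proofs (each $\mu_k$ is essentially confined to two consecutive blocks plus a pre-shrunk geometric tail). What the paper's version buys is a reusable standalone claim about finitely supported families; what yours buys is a single-pass construction with one fewer reduction. One small point worth making explicit in a write-up: $\varphi(\{m\})=\max\{\mu_k(\{m\})\colon m\in\mathrm{supp}(\mu_k)\}$ is finite precisely because of the finite-fiber hypothesis, so $\varphi$ and the $\psi_n$ really are submeasures in the paper's sense.
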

\begin{proof}
	 For any $k$ we can fix an $n_k$ such that $\mu_k(\om\setminus n_k)<2^{-k}$. Let $\mu'_k(A)=\mu_k(A\cap n_k)$. We claim that if
	$\varphi'=\sup\{\mu'_k\colon k\in\om\}$, then $\mathrm{Exh}(\varphi')=\mathrm{Exh}(\varphi)$.

Clearly, $\mathrm{Exh}(\varphi')\supseteq\mathrm{Exh}(\varphi)$ (because $\varphi'\leq \varphi$).
So, assume that $A\in\mathrm{Exh}(\varphi')$, i.e. for every $\eps>0$ there is an $N\in\om$ such that if $F\in [A\setminus N]^{<\om}$, then $\varphi'(F)<\eps$. We will find an $M$ such that $\varphi(F)<2\eps$ for every $F\in [A\setminus M]^{<\om}$. Let
$K\in\om$ be such that $2^{-K-1}\leq\eps<2^{-K}$. For all $k\leq K$ fix an $m_k\geq n_k$ such that $\mu_k(\om\setminus m_k)<2^{-K-1}$ and let $M=\max\{N,m_0,m_1,\dots,m_K\}$. It is easy to see that if $F\in [A\setminus M]^{<\om}$, then $\varphi(F)< \varphi'(F)+2^{-K-1}<2\eps$.

To finish the proof, we show the following general fact.

\begin{claim} Assume that $\psi=\sup\{\nu_k\colon k\in\om\}$ where $\nu_k$ is a measure for every $k$, $|\{k\colon m\in\mathrm{supp}(\nu_k)\}|<\om$ for every $m$, and $|\mathrm{supp}(\nu_k)|<\om$ for every $k$. Then $\mathrm{Exh}(\psi)$ is a generalized density ideal.
\end{claim}
\begin{proof}
We can easily find an interval partition $(P_n)_{n\in\om}$ of $\om$ such that for every $k$ there is an $n$ with $\mathrm{supp}(\nu_k)\subseteq P_n\cup P_{n+1}$. Let $\varphi_n(A)=\sup\{\nu_k(A\cap P_n)\colon k\in\om\}$ for every $n$. Notice that $\varphi_n$ is a
submeasure concentrated on $P_n$. We show that if $\vec{\varphi}=(\varphi_n)_{n\in\om}$, then $\mc{Z}_{\vec{\varphi}}=\mathrm{Exh}(\psi)$.

Clearly $\sup_{n\in\om}\varphi_n(A)\leq\sup_{k\in\om}\nu_k(A)$ holds for every $A$, in particular \[ \varphi_n(A)=\varphi_n(A\cap P_n)\leq\sup_{k\in\om}\nu_k(A\setminus \min(P_n)),\] and thus $\mc{Z}_{\vec{\varphi}}\supseteq \mathrm{Exh}(\psi)$.
Conversely, if $A\notin \mathrm{Exh}(\psi)$  then there is an $\eps>0$ such that for every $m$ there is a $k_m$ such that $\nu_{k_m}(A\setminus m)>\eps$. The set $\{k_m\colon m\in\om\}$ is infinite because the supports of $\nu_k$'s are finite. For a
fixed $m$, there is an $n_m$ such that $\mathrm{supp}(\nu_{k_m})\subseteq P_{n_m}\cup P_{n_m+1}$ and hence \[ \varphi_{n_m}(A)\geq\varphi_{n_m}(A\setminus m)>\eps/2\mbox{\ \ \ \ \ \ or\ \ \ \ \ \ }\varphi_{n_m+1}(A)\geq\varphi_{n_m+1}(A\setminus
m)>\eps/2.\] The set $\{n_m\colon
m\in\om\}$ is also infinite because $P_n\cup P_{n+1}$ can cover only finitely many supports of $\nu_k$'s. Therefore $\varphi_n(A)\nrightarrow 0$ and $A\notin\mc{Z}_{\vec{\varphi}}$.
\end{proof}

Clearly, $\varphi'$ and the sequence $(\mu'_k)_k$ satisfies the conditions of the claim, so we are done.
\end{proof}

\begin{cor}
If an analytic P-ideal is totally bounded and representable in $c_0$, then it is a generalized density ideal.
\end{cor}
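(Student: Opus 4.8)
The plan is to combine Proposition~\ref{Simple=c0} and Proposition~\ref{Verysimple} directly, using total boundedness as the bridge between them. So suppose $\mc{J}$ is an analytic P-ideal that is totally bounded and representable in $c_0$. First I would invoke Proposition~\ref{Simple=c0} to obtain an lsc submeasure $\varphi$ and a sequence $(\mu_k)_{k\in\om}$ of measures on $\om$ with $\mc{J}=\mathrm{Exh}(\varphi)$, $\varphi=\sup\{\mu_k\colon k\in\om\}$, and the crucial finiteness property that $\{k\colon m\in\mathrm{supp}(\mu_k)\}$ is finite for every $m\in\om$.

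Next I would use total boundedness. By definition, since $\mc{J}$ is totally bounded and $\mc{J}=\mathrm{Exh}(\varphi)$, the submeasure $\varphi$ must be finite, i.e. $\varphi(\om)<\infty$. But then $\mu_k(\om)\le\sup\{\mu_j(\om)\colon j\in\om\}=\varphi(\om)<\infty$ for every $k$, so every $\mu_k$ is a bounded measure.

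Finally, the sequence $(\mu_k)_{k\in\om}$ now satisfies exactly the hypotheses of Proposition~\ref{Verysimple}: the supports meet each point $m\in\om$ only finitely often, and each $\mu_k$ is bounded. Hence Proposition~\ref{Verysimple} gives that $\mc{J}=\mathrm{Exh}(\varphi)$ is a generalized density ideal, which is the desired conclusion. I do not expect any genuine obstacle here, since the substance has already been isolated in the two preceding propositions; the only point requiring a moment of care is that ``totally bounded'' is formulated as a property of the ideal itself (every lsc submeasure $\psi$ with $\mc{J}=\mathrm{Exh}(\psi)$ is finite), so that it legitimately applies to the particular $\varphi$ furnished by Proposition~\ref{Simple=c0}.
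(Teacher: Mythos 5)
Your proposal is correct and is exactly the argument the paper intends: the corollary is stated without proof precisely because it follows by chaining Proposition~\ref{Simple=c0} (to get $\varphi=\sup\{\mu_k\colon k\in\om\}$ with the finite-support condition), total boundedness (to get $\varphi(\om)<\infty$ and hence each $\mu_k$ bounded, since $\mu_k\leq\varphi$), and Proposition~\ref{Verysimple}. Your closing remark --- that total boundedness is a property of the ideal quantifying over all defining submeasures, so it applies to the particular $\varphi$ produced by Proposition~\ref{Simple=c0} --- is indeed the one point worth making explicit.
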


\begin{cor}
$\mathrm{tr}(\mc{N})$ is not representable in $c_0$.
\end{cor}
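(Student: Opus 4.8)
The plan is to derive a contradiction from the combinatorial obstructions already assembled. Suppose, towards a contradiction, that $\mathrm{tr}(\mc{N})$ is representable in $c_0$. Recall that $\mathrm{tr}(\mc{N})$ is an analytic P-ideal, and by the result of \cite{HHH} quoted above it is totally bounded, i.e. every lsc submeasure $\varphi$ with $\mathrm{tr}(\mc{N})=\mathrm{Exh}(\varphi)$ satisfies $\varphi(\om)<\infty$. Hence the hypotheses of the preceding Corollary are met, and we conclude that $\mathrm{tr}(\mc{N})$ is a generalized density ideal.

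Now I would invoke the two facts recorded at the start of this section: generalized density ideals are density-like, while on the other hand the Proposition above shows that $\mathrm{tr}(\mc{N})$ is summable-like. Since no ideal can be simultaneously density-like and summable-like, this is the desired contradiction, and therefore $\mathrm{tr}(\mc{N})$ is not representable in $c_0$.

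There is no real obstacle here: the corollary is a direct assembly of (i) total boundedness of $\mathrm{tr}(\mc{N})$ from \cite{HHH}, (ii) the chain Proposition~\ref{Simple=c0} $\to$ Proposition~\ref{Verysimple} $\to$ the previous Corollary, which forces a totally bounded $c_0$-representable analytic P-ideal to be a generalized density ideal, and (iii) the incompatibility of the summable-like and density-like properties together with the Proposition computing that $\mathrm{tr}(\mc{N})$ is summable-like. If anything deserved care it was step (iii), namely checking (via Remark~\ref{pleb}) that being summable-like is independent of the choice of defining submeasure, so that the summable-like-ness established through the antichain submeasure genuinely obstructs the generalized-density representation; but that has already been noted in the text. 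Thus the proof is a one-line deduction once these ingredients are in place.

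\begin{proof}
Suppose $\mathrm{tr}(\mc{N})$ were representable in $c_0$. It is a tall analytic P-ideal and, by \cite{HHH}, it is totally bounded; hence by the previous Corollary it would be a generalized density ideal, and in particular density-like. But by the Proposition above $\mathrm{tr}(\mc{N})$ is summable-like, and no ideal is both density-like and summable-like, a contradiction.
\end{proof}
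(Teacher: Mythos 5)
Your proof is correct and follows exactly the route the paper intends: the corollary is stated there without proof precisely because it is the immediate combination of total boundedness of $\mathrm{tr}(\mc{N})$ from \cite{HHH}, the preceding corollary forcing a totally bounded $c_0$-representable ideal to be a generalized density ideal, and the fact that $\mathrm{tr}(\mc{N})$ is summable-like and hence not density-like. Your extra remark about the invariance of summable-likeness under change of defining submeasure (via Remark~\ref{pleb}) is exactly the point the paper also flags, so nothing is missing.
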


The next result shows that among tall $F_\sigma$ ideals the representability in $c_0$ is equivalent to the representability in $\mathbb{R}$.

\begin{thm}\label{F-sigma}
A tall $F_\sigma$ P-ideal $\mathcal{I}$ is representable in $c_0$ iff it is a summable ideal.
\end{thm}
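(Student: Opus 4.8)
The plan is to prove both directions; the forward (``if'') direction is trivial and the reverse direction carries all the weight.

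If $\mc{I}=\mc{I}_h$ is summable (so $\sum_n h(n)=\infty$), then $n\mapsto h(n)e_0\in c_0$ represents $\mc{I}$: the net of finite partial sums of $\sum_{n\in A}h(n)e_0$ converges in $c_0$ precisely when $\sum_{n\in A}h(n)<\infty$. Conversely, let $\mc{I}$ be a tall $F_\sigma$ P-ideal with $\mc{I}=\mc{I}^{c_0}_h$. First I would apply Proposition \ref{Simple=c0} to obtain a sequence of measures $(\mu_k)_{k\in\om}$ on $\om$ with $\mc{I}=\mathrm{Exh}(\varphi)$, $\varphi=\sup_k\mu_k$, and $\{k:m\in\mathrm{supp}(\mu_k)\}$ finite for each $m$; tallness gives $\varphi(\{n\})\to 0$. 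If all the $\mu_k$ were bounded, Proposition \ref{Verysimple} would show $\mc{I}$ is a generalized density ideal, hence density-like, which is impossible for a tall $F_\sigma$ P-ideal; so some $\mu_k$ is unbounded.

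The goal is to show $\mc{I}=\mc{I}_g$ with $g(n)=\varphi(\{n\})$ (note $g(n)\to 0$), the inclusion $\mc{I}_g=\mc{I}_{\varphi(\{\cdot\})}\sub\mathrm{Exh}(\varphi)=\mc{I}$ being automatic. Suppose for contradiction that some $A\in\mc{I}$ has $\sum_{n\in A}g(n)=\infty$, and break $A$ into consecutive finite blocks $A_m$ with $\sum_{n\in A_m}g(n)\in[1,2)$. Since $\bigcup_{m\ge M}A_m=A\setminus(\text{finite})$ and $A\in\mathrm{Exh}(\varphi)$, we have $\varphi(A_m)\to 0$. On the other hand, choosing for each $n$ an index $k(n)$ with $\mu_{k(n)}(\{n\})=\varphi(\{n\})$ and grouping the $n\in A_m$ by $k(n)$, one gets $1\le\sum_{n\in A_m}\varphi(\{n\})\le\sum_{k\in T_m}\mu_k(A_m)\le|T_m|\,\varphi(A_m)$, where $T_m=\{k:\mathrm{supp}(\mu_k)\cap A_m\ne\0\}$, using subadditivity inside the block and the pointwise-finite overlap. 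Hence $|T_m|\to\infty$: over a sequence of pairwise disjoint finite blocks whose $\varphi$-masses tend to $0$ and whose union lies in $\mc{I}$, the number of measures carrying mass on a block is unbounded.

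The main obstacle — the technical core of the argument — is to rule this configuration out using that $\mc{I}$ is $F_\sigma$. Fixing (via Theorem \ref{sochar}) an lsc submeasure $\psi$ with $\mathrm{Fin}(\psi)=\mathrm{Exh}(\psi)=\mc{I}$, one refines each $A_m$ according to which $\mu_k$ is responsible for the singleton masses and then diagonalizes across the blocks and the indices in the $T_m$'s to manufacture a set that $\psi$ is forced to treat simultaneously as $\psi$-finite (being exhaustive on the $\psi$-finite set $A$) and as $\psi$-infinite — contradicting $\mathrm{Fin}(\psi)=\mc{I}$. This is the same obstruction that prevents $\mathrm{tr}(\mc{N})$ and Farah's ideal from being representable in $c_0$ at all, now played against the $F_\sigma$ hypothesis; it is where $F_\sigma$-ness, tallness, and the finite-overlap feature of $c_0$-representations are used together. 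Once this is in place, $\mc{I}=\mc{I}_{\varphi(\{\cdot\})}$ is summable. Equivalently, one can organize the endgame as: if the blocking argument did not apply, then $|T_m|$ would be bounded for all but finitely many $m$, whence $\varphi$ is $\mathrm{Exh}$-equivalent to a supremum of finitely many measures $\nu_0,\dots,\nu_{r-1}$; since $\mathrm{Exh}(\nu)=\mathrm{Fin}(\nu)=\mc{I}_{\nu(\{\cdot\})}$ for any single measure $\nu$, and a finite sum of measures represents the intersection of the corresponding summable ideals, $\mc{I}=\bigcap_{i<r}\mc{I}_{\nu_i(\{\cdot\})}=\mc{I}_{\nu_0(\{\cdot\})+\dots+\nu_{r-1}(\{\cdot\})}$ is again summable.
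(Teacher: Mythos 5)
Your setup is fine (the easy direction, the appeal to Proposition \ref{Simple=c0}, the remark that tallness forces $\varphi(\{n\})\to 0$, and the correct chain of inequalities giving $|T_m|\to\infty$ for the blocks of a putative counterexample $A$), but there is a genuine gap exactly where you locate the ``technical core'': the step that is supposed to turn the configuration ``$\varphi(A_m)\to 0$, $\sum_{n\in A_m}\varphi(\{n\})\geq 1$, $|T_m|\to\infty$'' into a contradiction with $\mathrm{Fin}(\psi)=\mathrm{Exh}(\psi)=\mc{I}$ is only described (``refine \dots and diagonalize \dots to manufacture a set''), never carried out, and it is not clear how it could be carried out from the data you have. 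All the blocks $A_m$ live inside the single set $A\in\mc{I}$, so any set you assemble from pieces of them is again a subset of $A$ and hence automatically in $\mc{I}=\mathrm{Fin}(\psi)$; you have no mechanism for forcing $\psi$ to blow up on such a set. The alternative ``endgame'' is also not sound: the $T_m$ are defined from the blocks of one particular $A$, and their boundedness would not make $\varphi$ $\mathrm{Exh}$-equivalent to a supremum of finitely many measures on all of $\om$. Note too that your target, $\mc{I}=\mc{I}_{\varphi(\{\cdot\})}$ for the representing $\varphi$, is strictly stronger than what the theorem asserts and is nowhere established; and the observation that some $\mu_k$ must be unbounded, while correct, is never used afterwards.

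The paper's proof is organized quite differently, and the difference is essential. It assumes only that $\mc{I}$ is not summable, forms the decreasing chain of genuinely summable ideals $\mc{I}_n=\mathrm{Exh}(\max_{m\le n}\mu_m)\supseteq\mc{I}$, and uses non-summability to choose witnesses $A_n\in\mc{I}_n\setminus\mc{I}$ with $\psi_n(A_n)<2^{-n}$; these witnesses lie \emph{outside} $\mc{I}$, which is what makes a contradiction available at all. It then places finite blocks $X'_{n+1}\subseteq A_{f(\max(X'_n)+1)}$ with $\varphi(X'_n)\approx 1$ (where $f$ is the finite-overlap function of the $c_0$-representation and $\varphi$ is a submeasure with $\mathrm{Exh}(\varphi)=\mathrm{Fin}(\varphi)=\mc{I}$), and exploits the dichotomy that $\mc{I}\rest X'$ is a tall $F_\sigma$ P-ideal (hence not density-like) sitting inside the generalized density ideal $\mc{Z}_{(\varphi\rest X'_n)_n}$ to extract $X\subseteq X'$ with $\varphi(X)=\infty$ yet $\varphi(X\cap X'_n)\to 0$; the tail estimates via $f$ and the bounds $\psi_n(A_n)<2^{-n}$ then show $X\in\mathrm{Exh}(\psi)=\mc{I}=\mathrm{Fin}(\varphi)$, a contradiction. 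Your proposal contains neither the witnesses outside $\mc{I}$ nor the density-like versus summable-like dichotomy, and without some such input the contradiction you need does not follow.
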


\begin{proof} (Non-trivial implication.)
Fix an lsc submeasure $\varphi$ such that $\mathcal{I} = \mathrm{Exh}(\varphi) = \mathrm{Fin}(\varphi)$. Suppose that $\mc{I}$ is representable in $c_0$, and (using Proposition \ref{Simple=c0}) fix a non-pathological submeasure
	$\psi=\sup\{\mu_n\colon n\in\om\}$ such that $\mc{I}=\mathrm{Exh}(\psi)$ and there is a strictly increasing function $f\colon \om\to\om$ such that $\mu_n(\{k\})=0$ if $n\geq f(k)$. Also, assume that $\varphi\geq \psi$. Otherwise, we could consider $\varphi+\psi$ instead of $\varphi$ (notice that $\mathrm{Exh}(\varphi)=\mathrm{Exh}(\varphi+\psi)$ and $\mathrm{Fin}(\varphi) =\mathrm{Fin}(\varphi+\psi)$). Because of tallness, $\varphi(\{n\}),\psi(\{n\})\to 0$.

\smallskip
Assume on the contrary that $\mathcal{I}$ is not summable. Let $\psi_n = \max_{m\leq n} \mu_m$. Then $\mathcal{I}_n = \mathrm{Exh}(\psi_n)$ is summable for every $n$ ($\mc{I}_n=\mc{I}_{h_n}$ where $h_n(k)=\psi_n(\{k\})$) and we have $\mathcal{I} \subseteq\dots\subseteq \mathcal{I}_{n+1}\subseteq \mc{I}_n\subseteq\dots\subseteq\mc{I}_0$. Hence, for each $n$ we can find $A_n \in \mathcal{I}_n\setminus \mathcal{I}$. We can assume that (i) $(A_n)_{n\in\om}$ are pairwise disjoint and (ii) $\psi_n(A_n)<2^{-n}$ for every $n$.

(i): Fix a sequence $(B_k)_{k\in\om}$ such that $H_n=\{k\colon A_n=B_k\}$ is infinite for every $n$. Then by recursion we can pick finite sets $F_k\subseteq B_k$ such that $\varphi(F_k)>k$ and $\max(F_k)<\min(F_{k+1})$. Finally let $A'_n=\bigcup\{F_k:k\in H_n\}$. Then $A'_n\subseteq A_n$ and $A'_n\notin \mc{I}$.

(ii): Applying tallness, for every $n$ there is an $m_n$ such that \[ \psi_n(A_n\setminus m_n)\leq\sum_{m\in A_n\setminus m_n}\psi_n(\{m\})<2^{-n},\]
hence after finite modifications (ii) holds true.

In particular, $\bigcup_{k\geq n}A_k\in\mc{I}_n$ because $\psi_n(A_k)\leq \psi_k(A_k)<2^{-k}$ (if $n\leq k$) and hence we can use $\sigma$-subadditivity of $\psi_n$.

Now we will construct a set $X\in \mathrm{Exh}(\psi)\setminus\mathrm{Fin}(\varphi)$, which will lead us to the contradiction. First, we can fix a sequence $(X'_n)_{n\in\om}$ such that
\begin{enumerate}
	\item[(a)] $X'_0 \in [A_0]^{<\om}$ and $X'_{n+1}\in \big[A_{f(\max(X'_n)+1)}\big]^{<\om}$,
	\item[(b)] $\max(X'_n)<\min(X'_{n+1})$,
	\item[(c)] $\varphi(X'_n)\approx 1$ for every $n$.
\end{enumerate}
Let $X'=\bigcup_{n\in\om}X'_n$ and consider the following ideals on $X'$: $\mc{I}\upharpoonright X'$ is a tall $F_\sigma$ P-ideal (hence not density-like), and $\mc{Z}_{\vec{\varphi}}$ is a tall generalized density ideal (hence density-like) where
$\vec{\varphi}=(\varphi\upharpoonright X'_n)_{n\in\om}$. Clearly, $\mc{I}\upharpoonright X' \subseteq\mc{Z}_{\vec{\varphi}}$. Therefore there is an $X\subseteq X'$ such that $\varphi(X)=\infty$ (i.e. $X\notin\mathrm{Fin}(\varphi)$) but $\varphi(X\cap
X'_n)\to 0$. We show that $X\in\mathrm{Exh}(\psi)$.

Let $k_n=\max(X'_n)+1$ and $X_n=X\cap X'_n$. Clearly $X_{n+1}\subseteq A_{f(k_n)}\cap [k_n,k_{n+1})$.
Fix $m,N\in\om$. We have two cases:

1. $f(k_N)\geq m$. Then using $N\leq k_N\leq f(k_N)$ we obtain that
\begin{align*}
\mu_m(X\setminus k_N) & \leq \mu_m \bigg(\bigcup_{j\geq f(k_N)} A_j\bigg) \leq \psi_m \bigg(\bigcup_{j\geq f(k_N)} A_j\bigg)\\
& \leq\sum_{j\geq f(k_N)} 2^{-j}=2^{-f(k_N)+1}\leq 2^{-k_N+1}\leq 2^{-N+1}.\end{align*}

2. $f(k_N)<m$. Then we work with a partition of $X\setminus k_N$ and obtain that
\begin{itemize}
	\item $\mu_m\big(X\cap [k_N, k_{N+1})\big) = \mu_m (X_{N+1}) \leq \psi (X_{N+1}) \leq \varphi(X_{N+1})$;
	\item $\mu_m(X\setminus k_{N+1}) \leq \mu_m \big(\bigcup_{j\geq f(k_{N+1})} A_j\big) \leq \mu_m \big(\bigcup_{j\geq N} A_j\big) \leq \psi_m \big(\bigcup_{j\geq N} A_j\big)\leq 2^{-N+1}$.
\end{itemize}

In both cases, $\mu_m(X\setminus k_N) \leq \varphi(X_{N+1}) + 2^{-N+1}$. Moreover this value does not depend on $m$ and tends to $0$. Hence $\psi(X\setminus k_N)=\sup_{m\in\om}\mu_m(X\setminus k_N)\xrightarrow{N\to\infty}0$, i.e. $X\in \mathrm{Exh}(\psi)$.
\end{proof}

\begin{cor} Farah's ideal and the Tsirelson ideals are not representable in $c_0$.
\end{cor}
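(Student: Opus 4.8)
The plan is to read this corollary off Theorem \ref{F-sigma}: since that theorem says a tall $F_\sigma$ P-ideal is representable in $c_0$ exactly when it is a summable ideal, it is enough to check, for Farah's ideal $\mc{J}_F$ and for each Tsirelson ideal $\mc{T}$, the three facts (i) tall, (ii) $F_\sigma$ P-ideal, (iii) not summable; then representability in $c_0$ fails by the contrapositive of Theorem \ref{F-sigma}.

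For $\mc{J}_F$, facts (ii) and (iii) are already recorded in Section \ref{prelim}: one has $\mc{J}_F=\mathrm{Exh}(\varphi)=\mathrm{Fin}(\varphi)$ for the evident lsc submeasure, and $\mc{J}_F$ was introduced precisely as the simplest $F_\sigma$ P-ideal that is \emph{not} a summable ideal (see \cite[Section 1.11]{farah}). For (i), given an infinite $B\subseteq\om$ one picks at most one point of $B$ in each block $[2^n,2^{n+1})$, obtaining an infinite $A\subseteq B$ with $|A\cap[2^n,2^{n+1})|\le 1$ for all $n$, so that $\sum_{n}\min\{n,|A\cap[2^n,2^{n+1})|\}/n^2\le\sum_n 1/n^2<\infty$ and hence $A\in\mc{J}_F$. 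Thus Theorem \ref{F-sigma} gives that $\mc{J}_F$ is not representable in $c_0$. For the Tsirelson ideals one invokes \cite{farah-tsirelson} and \cite{veli-tsirelson}, where (for both the original Tsirelson space and its dual) the Tsirelson ideal is shown to be a tall $F_\sigma$ P-ideal which is not summable; moreover, for the ideal built from the classical (non-dual) Tsirelson space, non-summability also follows from the discussion in the present section, since a genuine summable ideal which is not a trivial modification of $\mathrm{Fin}$ is summable-like, whereas that Tsirelson ideal was observed to be neither summable- nor density-like. Again Theorem \ref{F-sigma} applies.

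The step requiring the most attention — though it is really bookkeeping rather than a genuine obstacle — is making sure that in each case all of (i)--(iii) are genuinely in force and that the property being used is exactly ``not summable'' (as opposed to something weaker, such as ``not a density ideal'' or merely ``non-pathological''): for $\mc{J}_F$ this is entirely explicit above, while for the Tsirelson ideals it rests on the cited literature together with the summable-like remarks in this section.
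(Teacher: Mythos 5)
Your proof is correct and takes essentially the same route as the paper: the corollary is stated without proof immediately after Theorem \ref{F-sigma}, and the intended argument is exactly the one you give, namely that Farah's ideal and the Tsirelson ideals are tall $F_\sigma$ P-ideals which are not summable (facts recorded in Section \ref{prelim}, Example \ref{tsirelson}, and the cited literature), so the contrapositive of Theorem \ref{F-sigma} applies. Your explicit check of tallness for $\mc{J}_F$ and your bookkeeping for the Tsirelson case (including the observation that tall ideals are not trivial modifications of $\mathrm{Fin}$, so summability would force summable-likeness) are correct.
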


\begin{prop}\label{Dichotomy}
Let $\mathcal{I}$ be representable in $c_0$. Assume that there is no $A\in[\om]^\om$ such that $\mathcal{I}\upharpoonright A$ is contained in a summable ideal. Then $\mathcal{I}$ is a generalized density ideal.
\end{prop}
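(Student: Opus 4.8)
The proof itself is short; its substance is contained in Propositions~\ref{Simple=c0} and~\ref{Verysimple}, and the hypothesis on $\mc{I}$ is used exactly once, to force the measures occurring in the $c_0$-representation to be bounded.

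First I would invoke Proposition~\ref{Simple=c0}: since $\mc{I}$ is representable in $c_0$, we may fix an lsc submeasure $\varphi$ and a sequence $(\mu_k)_{k\in\om}$ of measures on $\om$ such that $\mc{I}=\mathrm{Exh}(\varphi)$, $\varphi=\sup\{\mu_k\colon k\in\om\}$, and $\{k\colon m\in\mathrm{supp}(\mu_k)\}$ is finite for every $m\in\om$. (Concretely these come from a function $h\colon\om\to c_0$ with $h\geq 0$, as in the proof of Theorem~\ref{B-rep}, after the truncation carried out in Proposition~\ref{Simple=c0}.)

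The key step is to show that every $\mu_k$ is bounded, i.e.\ $\mu_k(\om)<\infty$. Suppose not, say $\mu_k(\om)=\infty$ for some fixed $k$. Since $\mu_k\leq\varphi$ and $\varphi$ is a submeasure, $\mu_k(\{m\})\leq\varphi(\{m\})<\infty$ for every $m$; being $\sigma$-additive on $\mc{P}(\om)$, $\mu_k$ satisfies $\mu_k(A)=\sum_{m\in A}\mu_k(\{m\})$ for all $A\subseteq\om$, so
\[ \mathrm{Exh}(\mu_k)=\Big\{A\subseteq\om\colon\sum_{m\in A}\mu_k(\{m\})<\infty\Big\}=\mc{I}_{\mu_k(\{\cdot\})},\]
the summable ideal generated by the mass function $m\mapsto\mu_k(\{m\})$, which is a proper ideal because $\sum_m\mu_k(\{m\})=\mu_k(\om)=\infty$. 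On the other hand $\varphi\geq\mu_k$ gives $\mc{I}=\mathrm{Exh}(\varphi)\subseteq\mathrm{Exh}(\mu_k)$, so taking $A=\om\in[\om]^\om$ the ideal $\mc{I}\rest A=\mc{I}$ is contained in a summable ideal -- contradicting the hypothesis. (Only the instance $A=\om$ of the hypothesis is needed here.) Hence every $\mu_k$ is bounded.

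It remains to observe that $(\mu_k)_{k\in\om}$, the submeasure $\varphi=\sup\{\mu_k\colon k\in\om\}$, and the ideal $\mc{I}=\mathrm{Exh}(\varphi)$ now satisfy all the hypotheses of Proposition~\ref{Verysimple}: for each $m$ only finitely many $\mu_k$ have $m$ in their support, and each $\mu_k$ is bounded. Thus Proposition~\ref{Verysimple} applies directly and yields that $\mc{I}$ is a generalized density ideal, completing the proof. There is essentially no obstacle beyond this bookkeeping; the one modest point requiring care is the identification $\mathrm{Exh}(\mu_k)=\mc{I}_{\mu_k(\{\cdot\})}$ for a $\sigma$-additive $\mu_k$ of infinite total mass, together with the remark that this really is a \emph{proper} summable ideal, which is what makes the clash with the hypothesis legitimate.
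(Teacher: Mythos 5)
Your proposal is correct and follows essentially the same route as the paper: reduce via Proposition~\ref{Simple=c0}, rule out an unbounded $\mu_k$ by noting $\mc{I}\subseteq\mathrm{Exh}(\mu_k)=\mc{I}_{\mu_k(\{\cdot\})}$ would violate the hypothesis, and then apply Proposition~\ref{Verysimple}. The only (immaterial) difference is that the paper witnesses the contradiction with $A=\mathrm{supp}(\mu_k)$ rather than $A=\om$.
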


\begin{proof}
	 We can assume that $\mc{I}=\mathrm{Exh}(\varphi)$, where $\varphi = \sup\{\mu_k\colon k\in\om\}$ is such that $\{k\colon m\in\mathrm{supp}(\mu_k)\}$ is finite for every $m$.
	Suppose there is $k\in\omega$ such that $\mu_k$ is unbounded. Let $A = \mathrm{supp}(\mu_k)$ and notice that $\mathcal{I}\upharpoonright A \subseteq \mathrm{Exh}(\mu_k)=\mc{I}_{\mu_k(\{\cdot\})}$. Therefore, $\mathcal{I}\upharpoonright A$ is contained in a summable ideal. Hence  each $\mu_k$ is bounded. But then, by Proposition \ref{Verysimple}, $\mathcal{I}$ is a generalized density ideal.
\end{proof}



It seems that it is difficult to find a density-like ideal which is \textbf{not} a generalized density ideal (see Question \ref{densitylike}), so it is difficult to find a (non-pathological) density-like ideal which is not representable in $c_0$. The above proposition
shows that the situation differs drastically with summable-like ideals: it is impossible to find an example which \textbf{is} representable in $c_0$ and which does not resemble a summable ideal at least locally. So, representability in $c_0$ seems to be
closely connected to density-likeness. However the question of the full characterization is still open.

\begin{que}
How to characterize analytic P-ideals which can be represented in $c_0$?
\end{que}

It is connected to an irritating question mentioned before:

\begin{que}\label{densitylike}
Is there a density-like ideal which is not a generalized density ideal?
\end{que}

In the context of Proposition \ref{Dichotomy} there remains a question, how much ideals contained in a summable ideal resemble summable ideals itself. Despite the fact that summable ideals are quite small, there are nontrivial examples of ideals covered
by a summable ideal which seem to be quite far from being summable itself. A natural example of such an ideal is Farah's ideal. Indeed, notice that $\mc{J}_F$ can be covered by summable ideals in many ways, e.g. let $h(k)=1/n$ if $k\in [2^n,2^n+n)$ and $h(k)=0$ else, then $\mc{J}_F\subseteq\mc{I}_h$. There is even a density ideal covered by a summable ideal:

\begin{exa}
Let $\mu_n(\{k\})=1/n$, $h(k)=2^{-n}$ for $k\in [2^n,2^{n+1})$. Then $\mc{Z}_{\vec{\mu}}\subseteq\mc{I}_h$ simply because if $a\in\om^\om$ and $\frac{a_n}{n}\to 0$ then $\sum_{n\in\om}\frac{a_n}{2^n}<\infty$.
\end{exa}

On the other hand, $\mathrm{tr}(\mc{N})$ cannot be covered by any summable ideal.


\begin{exa}
	$\mathrm{tr}(\mc{N})$ (and hence $\mc{Z}$) cannot be covered by a summable ideal. Let $h\colon 2^{<\om}\to [0,\infty)$.  It is easy to find an branch $x\in 2^{\om}$ such that $\sum_{x\upharpoonright n \sub t} h(t) = \infty$ for each $n\in \om$.
	Choose a sequence of pairwise disjoint finite sets $(F_n)_{n\in\om}$ such that $F_n \subseteq \{t\colon x\upharpoonright n \subseteq t\}$ and $\sum_{t\in F_n}
	h(t)>1$. Then $\bigcup_{n\in\om} F_n\in\mathrm{tr}(\mc{N})\setminus\mc{I}_h$.
\end{exa}

\begin{que}\label{containedinsummable}
How do (analytic P-)ideals which are contained in a summable ideal look like?
\end{que}

This question is connected also to the question of representability in $\mathbb{R}^\omega$. We mentioned (see Example \ref{exaXk}) that an ideal is representable in $\mathbb{R}^\om$ iff it is a countable intersection of summable ideals. In particular, it has
to be contained in a summable ideal, so neither $\mathrm{tr}(\mc{N})$ nor $\mc{Z}$ is representable in $\mathbb{R}^\om$.

\begin{que}\label{R^om}
Is there any characterization of ideals representable in $\mbb{R}^\om$?
\end{que}


In the next diagram, we summarize all possible connections between properties of ideals we investigated, and also put easy examples into every ``bubble'' (where we know any).
\begin{center}
\begin{tikzpicture}[scale=1]
\draw (-8,-5) coordinate(a)
node[anchor=north west] {non-pathological analytic P-ideals (i.e. Banach-representable ideals)};
\draw (6.5,-0.5) coordinate(g)
node[anchor=south west] {{\huge ?}};
\draw (4,0) coordinate(h)
node[anchor=south west] {{\huge ?}};
\draw (-8,5) coordinate(b)
node[anchor=north west] {summable-like ideals};
\path (-4,5) coordinate(c1)
(0,0) coordinate(c2)
(2,-5) coordinate(c3);
\draw[line width=1.2pt]
(c1) .. controls (c2)
and (c3) .. (c3);
\path (0,5) coordinate(d1)
(0,0) coordinate(d2)
(8,-3) coordinate(d3);
\filldraw[fill=lightgray,draw=black,line width=1.2pt] (c1) .. controls (c2) and (c3)
.. (c3) -- (8,-5) -- (d3) ..controls (d2) and (d1) .. (d1) -- (c1);
\draw (-3,5) coordinate(m)
node[anchor=north west] {M\'atrai's land};
\draw (8,5) coordinate(x)
node[anchor=north east] {density-like ideals};
\draw[line width=1.2pt] (-8,-5) rectangle (+8,+5);
\draw (-6,3) coordinate(y)
node[anchor=west] {$\mathrm{tr}(\mathcal{N})$};
\draw[line width=1.2pt] (3.4,2.3) circle (40pt);
\draw (2.3,2.7) coordinate(yff)
node[anchor=west] {generalized};
\draw (2.15,2.1) coordinate(yfff)
node[anchor=west] {density ideals};
\draw[line width=1.4pt] (2.6,1.45) circle (1pt);
\draw (2.6,1.45) coordinate(yyyfff)
node[anchor=west] {$\mathrm{Fin}$};
\draw[line width=1.4pt] (-6,3) circle (1pt);
\draw (-3.6,-4.5) coordinate(yy)
node[anchor=west] {$\mathcal{J}_F$};
\draw[line width=1.4pt] (-3.6,-4.5) circle (1pt);
\draw (-2.8,0.4) coordinate(yy)
node[anchor=west] {$\mathcal{I}_{1/n}\oplus\mathcal{Z}$};
\draw[line width=1.4pt] (3.75,1.45) circle (1pt);
\draw (3.75,1.45) coordinate(yygf)
node[anchor=west] {$\mathcal{Z}$};

\draw[line width=1.4pt] (-1.8,-3.2) circle (1pt);
\draw (-1.8,-3.2) coordinate(yygf)
node[anchor=west] {$\mathcal{I}_{1/n}$};
\draw[line width=1.4pt] (-2.8,0.4) circle (1pt);
\path (-8,-3) coordinate(e1)
(-1,4) coordinate(e2)
(5,-5) coordinate(e3);
\draw[line width=1.2pt]
(e1) .. controls (e2)
and (e3) .. (e3)
node[pos=0.1,sloped,anchor=north] {tall $F_\sigma$ ideals};
\draw[line width=1.2pt] (-3,-3) ..controls (-5,3) and (3,7) .. (6,3)
node[pos=0.56,sloped,anchor=north] {ideals repr. in $c_0$} ..controls (8,-1) and (-1.5,-6) .. (-3,-3);


\draw[line width=1.2pt] (-5.7,-3.5) circle (35pt);
\draw (-6.2,-2.9) coordinate(frfrf)
node[anchor=west] {tall};
\draw (-6.6,-3.4) coordinate(fr)
node[anchor=west] {Tsirelson};
\draw (-6.3,-3.9) coordinate(frr)
node[anchor=west] {ideals};

\draw (-2.3,-1.7) coordinate(yff)
node[anchor=west] {non-trivial};
\draw (-2.8,-2.3) coordinate(yfff)
node[anchor=west] {summable ideals};
\end{tikzpicture}\,
\end{center}

Explanations:
\begin{itemize}
\item A summable ideal is {\em non-trivial} if it is not a trivial modification of $\mathrm{Fin}$.
\item $\mc{J}_0\oplus\mc{J}_1=\{A\subseteq 2\times\om\colon \{n\colon (i,n)\in A\}\in \mc{J}_i, i=0,1\}$.
\item ``?'' means that we do not know any examples in this ``bubble.''
\end{itemize}

\section{Representability in $\ell_1$}\label{ell_1}

Notice that if an ideal is represented in $\ell_1$ by a sequence whose every coordinate is non-negative, then it is a summable ideal. However, in general this is not true. We present an example of an ideal $\mathcal{I}$ which is representable in $\ell_1$ but is not summable. The construction is motivated by the standard example of an unconditionally convergent but not absolutely convergent sequence in $\ell_1$.

We will need two interval partitions $(P_n)_{n\in\om}$ and $(Q_n)_{n\in\om}$ of $\om$: Let $P_n=[0+1+2+\dots+n,0+1+2+\dots+n+(n+1))$, let $Q_0=[0,1)$ and if $n>0$ then let $Q_n=[1+2+4+\dots+2^{n-1},1+2+4+\dots+2^n)$. Define the following sequence of ``Rademacher-like'' vectors:
\begin{itemize}
		\item $r_0 = (1)$;
		\item $r_1 = (\frac{1}{2}, \frac{1}{2})$, $r_2 = (\frac{1}{2}, -\frac{1}{2})$;
		\item $r_3 = (\frac{1}{4}, \frac{1}{4}, \frac{1}{4}, \frac{1}{4})$, $r_4 = (\frac{1}{4}, \frac{1}{4}, -\frac{1}{4}, -\frac{1}{4})$, $r_5 = (\frac{1}{4}, -\frac{1}{4}, \frac{1}{4}, -\frac{1}{4})$;
		\item $r_6 = (\frac{1}{8}, \frac{1}{8},\frac{1}{8},\frac{1}{8},\frac{1}{8},\frac{1}{8},\frac{1}{8},\frac{1}{8},\frac{1}{8})$, $r_7
			 =(\frac{1}{8},\frac{1}{8},\frac{1}{8},\frac{1}{8},-\frac{1}{8},-\frac{1}{8},-\frac{1}{8},-\frac{1}{8})$, \\ $r_8 = (\frac{1}{8},\frac{1}{8},-\frac{1}{8},-\frac{1}{8},\frac{1}{8},\frac{1}{8},-\frac{1}{8},-\frac{1}{8})$, $r_9 =
			 (\frac{1}{8},-\frac{1}{8},\frac{1}{8},-\frac{1}{8},\frac{1}{8},-\frac{1}{8},\frac{1}{8},-\frac{1}{8})$;
	\end{itemize}
	and so on. In general, in the $n$th block we construct $r_i\in\mbb{R}^{2^n}$ for $i\in P_n$ (notice that $2^n=|Q_n|$), they are the first $|P_n|$ many ``Rademacher-like'' vectors in $\mbb{R}^{2^n}$. Define the operation $T\colon \mbb{R}^{2^n}\to L^1[0,1]$ by $T(x)\upharpoonright I_k\equiv 2^nx_k$ for $k<2^n$ where $I_k=\big[\frac{k}{2^n},\frac{k+1}{2^n}\big)$ and $x=(x_k)_{k<2^n}$. Then $T$ is an isometric linear embedding and $\{T(r_i):i\in P_n\}$ is the sequence of the first $n+1$ usual Rademacher functions. In particular the following version of Khintchine's inequality holds true: there is $C>0$ such that for any sequence $(c_i)_{i\in P_n}$ of real numbers
	\[ \bigg\| \sum_{i\in P_n} c_i r_i \bigg\|_1 \leq C \bigg(\sum_{i\in P_n} c_i^2 \bigg)^{1/2} \]
	(this is an immediate corollary of \cite[Theorem 3.25]{Heil}).

Now, we will define a sequence $x=(x_i)_{i\in\om}$ in $\ell_1$ which will represent the {\em Rademacher-ideal} $\mathcal{J}_R=\mc{I}^{\ell_1}_x$. Simply ``shift'' the vectors $(r_i)_{i\in P_n}$ to the interval $Q_n$, put zeros into every other coordinate, and divide it with $n$ (if $n>0$). More precisely, if $i\in P_n$ then let $\mathrm{supp}(x_i)=Q_n$ and
\[ x_i\big(\min(Q_n)+k\big)= \frac{r_i(k)}{n}\;\;\;\text{for}\;\;k<2^n. \]

For $X \sub \omega$ let $A_X = \{n\in \omega\colon P_n \cap X \ne \emptyset \}$.
\begin{thm}$ $
\begin{itemize}
\item[(0)] $\mc{J}_R$ is a tall $F_\sigma$ ideal.
\item[(1)] If $X\in \mc{J}_R$ then $A_X\in \mc{I}_{1/n}$.
\item[(2)] If $A_X\in\mc{I}_{1/\sqrt{n}}$ then $X\in\mc{J}_R$.
\item[(3)] $\mc{J}_R$ is not a summable ideal.
\end{itemize}
\end{thm}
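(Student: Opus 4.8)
Parts (0)--(2) are routine and use only that $\|x_i\|_1=1/n$ for $i\in P_n$ (all entries of $r_i$ have modulus $2^{-n}$, so $\|r_i\|_1=1$) together with Khintchine's inequality as quoted. Since the blocks $Q_n$ have pairwise disjoint supports, for finite $E\subseteq\om$ we have $\|\sum_{i\in E}x_i\|_1=\sum_n\|\sum_{i\in E\cap P_n}x_i\|_1$, and on one block $\|\sum_{i\in E\cap P_n}x_i\|_1=\frac1n\|\sum_{i\in E\cap P_n}r_i\|_1$, which lies between $\frac1n$ and $\frac Cn\sqrt{|E\cap P_n|}\le \frac{2C}{\sqrt n}$ whenever $E\cap P_n\neq\0$. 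Hence $\mc J_R=\mathrm{Fin}(\varphi)$ for the lsc submeasure $\varphi(X)=\sum_n\sup\{\|\sum_{i\in E}x_i\|_1:E\subseteq X\cap P_n\}$, which gives (0) (properness is $\om\notin\mc J_R$ by (1); tallness follows by thinning any infinite $Y$ to a transversal $\{i_{n_k}\}$, $i_{n_k}\in Y\cap P_{n_k}$, with $\{n_k\}\in\mc I_{1/\sqrt n}$, and applying (2)). For (1): if $X\in\mc J_R$ then $\sum_{i\in X}x_i$ is unconditionally Cauchy, and testing this on finite subsets of $\{i_n:n\in A_X\}$, where $i_n\in X\cap P_n$, forces $\sum_{n\in A_X}1/n<\infty$. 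For (2): if $\sum_{n\in A_X}1/\sqrt n<\infty$, the above bound gives $\|\sum_{i\in E}x_i\|_1\le 2C\sum_{n\in A_X}1/\sqrt n$ for all finite $E\subseteq X$, with tails tending to $0$, so $\sum_{i\in X}x_i$ is unconditionally Cauchy, hence convergent by completeness of $\ell_1$.

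For (3), assume toward a contradiction that $\mc J_R=\mc I_g$ for some $g\colon\om\to[0,\infty)$ with $\sum_i g(i)=\infty$, and put $\mu_n=\min\{g(i):i\in P_n\}$ for $n\ge1$. \emph{Step 1: $\sum_{n\ge1}\mu_n=\infty$.} Pick $i_n\in P_n$ with $g(i_n)=\mu_n$ (and $i_0=0$); then $A_{\{i_n:n\in\om\}}=\om\notin\mc I_{1/n}$, so by (1) the set $\{i_n:n\in\om\}$ is not in $\mc J_R=\mc I_g$, i.e.\ $\sum_n g(i_n)=\infty$. \emph{Step 2: the summable ideal $\mc I_h$ with $h(n)=(n+1)\mu_n$ contains $\mc I_{1/\sqrt n}$.} (It is a genuine summable ideal, since $\sum h(n)\ge\sum\mu_n=\infty$ by Step 1.) If $\sum_{n\in A}1/\sqrt n<\infty$ then $A_{\bigcup_{n\in A}P_n}=A\in\mc I_{1/\sqrt n}$, so by (2) $\bigcup_{n\in A}P_n\in\mc J_R=\mc I_g$, i.e.\ $\sum_{n\in A}\sum_{i\in P_n}g(i)<\infty$; as $\sum_{i\in P_n}g(i)\ge|P_n|\mu_n=(n+1)\mu_n=h(n)$, we get $A\in\mc I_h$.

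It remains to see that Steps 1 and 2 are incompatible, and this is the heart of the argument: the plan is to build $A\in\mc I_{1/\sqrt n}$ with $\sum_{n\in A}(n+1)\mu_n=\infty$. For $k\ge1$ let $B_k=\{n\ge1:\mu_n>2^{k+1}n^{-3/2}\}$. Off $B_k$ one has $\mu_n\le 2^{k+1}n^{-3/2}$ and $\sum_n n^{-3/2}<\infty$, so by Step 1 $\sum_{n\in B_k}\mu_n=\infty$, hence $\sum_{n\in B_k}(n+1)\mu_n=\infty$ as well; and on $B_k$ we have $1/\sqrt n<2^{-k-1}(n+1)\mu_n$. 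Now pick a finite $F_k\subseteq B_k$: if $\sum\{(n+1)\mu_n:n\in B_k,\ (n+1)\mu_n\le1\}=\infty$, greedily accumulate such $n$'s until the running sum of $(n+1)\mu_n$ first enters $[1,2)$; otherwise $\{n\in B_k:(n+1)\mu_n>1\}$ is infinite, and let $F_k$ consist of one such $n$ with $1/\sqrt n\le2^{-k}$. Either way $\sum_{n\in F_k}(n+1)\mu_n\ge1$ and $\sum_{n\in F_k}1/\sqrt n\le2^{-k}$, and (removing finitely many terms keeps the series divergent, resp.\ the set infinite) $F_k$ may be taken with $\min F_k>\max F_{k-1}$, so the $F_k$ are pairwise disjoint. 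Then $A=\bigcup_k F_k$ satisfies $\sum_{n\in A}1/\sqrt n\le\sum_k2^{-k}<\infty$, so $A\in\mc I_{1/\sqrt n}$, yet $\sum_{n\in A}(n+1)\mu_n\ge\sum_k1=\infty$, contradicting Step 2; hence $\mc J_R$ is not a summable ideal. I expect this last paragraph to be the main obstacle: morally it relies on the dichotomy ``$\mu$ is concentrated (then a block-index carries much $(n+1)\mu_n$-mass at tiny $1/\sqrt n$-cost) or spread out (then high indices do)'', which is usually packaged as a comparison theorem for summable ideals; the $B_k$-truncation is meant to sidestep that, exploiting only that on $B_k$ the cost $1/\sqrt n$ is a minute multiple of the gain $(n+1)\mu_n$ while $\sum_{B_k}\mu_n=\sum_{B_k}\frac{(n+1)\mu_n}{n+1}$ still diverges, so one can buy unbounded $(n+1)\mu_n$-mass at summable $1/\sqrt n$-cost.
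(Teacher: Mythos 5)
Your proof is correct and follows essentially the same route as the paper: (0)--(2) via the disjointness of the blocks $Q_n$ together with Khintchine's inequality, and (3) by playing the minimum-weight transversal of the $P_n$'s (which by (1) forces $\sum_n \mu_n=\infty$) against the block sums controlled by (2). The only organizational difference is that the paper packages the key quantitative step as a general comparison lemma ($\mc{I}_a\subseteq\mc{I}_b$ implies $\mc{I}_{ac}\subseteq\mc{I}_{bc}$, applied with $c(n)=1/n$), whose proof uses exactly the dyadic level sets $B_k=\{n\colon b_n\geq 2^k a_n\}$ and greedy finite selections that you construct inline for the one special case you need.
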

\begin{proof}
(0): Tallness is trivial because $\|x_i\|=1/n$ if $i\in P_n$. Let $\varphi\colon \mc{P}(\om)\to [0,\infty)$ be defined by
\[ \varphi(A)=\sum_{n\in\om}\max\bigg\{\bigg\|\sum_{i\in F}x_i\bigg\|_1\colon \0\ne F\subseteq P_n \cap A \bigg\}.\]
One can see that $\varphi$ is a submeasure and $\mc{J}_R=\mathrm{Fin}(\varphi)$.

\smallskip
(1): Let $\sum_{i\in X}x_i=a$ and assume on the contrary that $\sum_{n\in A_X} 1/n = \infty$. Without loss of generality assume that $X \cap P_n= \{j_n\}$ is a singleton for each $n\in A_X$. Clearly, $|| x_{j_n} ||_1 = 1/n$ for each $n\in A_X$ and thus $\|a\|_1=\big\| \sum_{k \in X} x_k \big\|_1 = \big\| \sum_{n\in A_X} x_{j_n} \big\|_1 = \sum_{n\in A_X} ||x_{j_n}||_1 =  \sum_{n\in A_X} 1/n=\infty$, a contradiction.

\smallskip
(2): Suppose $\sum_{n\in A_X} 1/\sqrt{n} < \infty$. This time we can assume that $X \cap P_n= P_n$ for each $n\in A_X$. We are going to use different (but in $\ell_1$ equivalent, see \cite[Theorem 3.10]{Heil}) definition of unconditional convergence: $\sum (x_n)_n$ is unconditionally convergent if for any choice of signs $(\epsilon_n)_n$ the series $\sum (\epsilon_n x_n)_n$ is convergent, in the classical sense, that is, the sequence of initial subsums is convergent. 

For any choice of signs $(\epsilon_k)_k$, $K_0\in P_{n_0}$, and $K_1\in P_{n_1}$, $K_0< K_1$, $n_0,n_1\in A_X$, using Khintchine's inequality we have
\begin{align*}
\bigg\| \sum_{k\in X\cap [K_0, K_1)} \epsilon_k x_k \bigg\|_1 & =\bigg\|\sum_{i\in P_{n_0}\setminus K_0}\epsilon_ix_i\bigg\|_1+\sum_{n\in A_X\cap (n_0,n_1)} \bigg\| \sum_{i\in P_n} \epsilon_i x_i \bigg\|_1 +\bigg\|\sum_{i\in P_{n_1}\cap K_1}\epsilon_ix_i\bigg\|\\
& \leq C\cdot\!\!\!\sum_{n\in A_X\cap [n_0,n_1]} \bigg(\frac{n+1}{n^2}\bigg)^{1/2}\xrightarrow{n_0,n_1\to\infty} 0  \end{align*}
because of our assumption on $A_X$.

\smallskip
Before (3), we need a general Lemma:
\begin{lem}
Assume that $a=(a_n)_{n\in\om},b=(b_n)_{n\in\om},c=(c_n)_{n\in\om}\in c_0$, $a_n,b_n,c_n\geq 0$ for every $n$, and that $\mc{I}_a\subseteq\mc{I}_b$. Then $\mc{I}_{ac}\subseteq\mc{I}_{bc}$ (where $ac=(a_nc_n)_{n\in\om}$ and $bc=(b_nc_n)_{n\in\om}$).
\end{lem}
\begin{proof}
Fix an $X\in \mc{I}_{ac}$. We can assume that $X=\om$ by restricting our sequences to $X$. We have to show that $\om\in\mc{I}_{bc}$. For every $m$ define $B_m=\{n\colon b_n\geq 2^ma_n\}$, and let $C=\|c\|_0$. 

\smallskip
\textbf{Claim} There is an $m$ such that $B_m\in\mc{I}_b$. 
\medskip

Suppose that $B_m\notin \mc{I}_b$ for every $m$. Then we can find a sequence $(F_m)_{m\in\om}$ of finite sets such that for every $m$ (i) $F_m\subseteq B_m$, (ii) $2^{-m}\leq s_a(F_m)< 2^{-m+1}$, and (iii) $\max(F_m)<\min(F_{m+1})$. Then clearly $B=\bigcup_{m\in\om}B_m\in \mc{I}_a$ but $s_b(F_m)\geq  1$ for every $m$ hence $B\notin \mc{I}_b$, a contradiction.
\medskip

Let $m$ be such that $B_m\in\mc{I}_b$. Then
\begin{align*} \sum_{n\in\om}b_nc_n & \leq C\sum_{n\in B_m}b_n+\sum_{n\notin B_m}b_nc_n\\
& \leq C\sum_{n\in B_m}b_n+ 2^m\sum_{n\notin B_m}a_nc_n\\ &\leq C\sum_{n\in B_m}b_n+2^m\sum_{n\in\om}a_nc_n<\infty.
\end{align*}
\end{proof}

(3): The ideal $\mathcal{I}$ is not a summable ideal. Suppose that $\mc{J}_R=\mc{I}_h$ for some $h\colon \om \to [0,\infty)$. Let $d(n)=s_h(P_n)$ and $e(n) = s_h(P_n)/n$.
According to (2) $\mathcal{I}_{1/\sqrt{n}} \subseteq \mathcal{I}_d$ and by the Lemma (applied for $a(n)=1/\sqrt{n}$, $b(n)=d(n)$, and $c(n)=1/n$) we have $\mathcal{I}_{\frac{1}{n \sqrt{n}}}  \subseteq \mathcal{I}_e$. This just means that $e\in \ell_1$.

For each $n\in \omega$ pick $i_n \in P_n$ of the smallest possible weight with respect to $h$. (1) implies that the set $X = \{i_n \colon n\in\omega\}\notin\mathcal{J}_X$ (simply because $A_X  = \omega \notin
\mathcal{I}_{1/n}$). But $\sum_{n\in X} h(n) \leq \sum_{n\in \omega} \frac{d(n)}{n}= \sum_{n\in\omega} e(n) < \infty$,
a contradiction.
\end{proof}

\begin{que}
Is there a nice characterization of representability in $\ell_1$? Does it imply e.g. that the ideal is $F_\sigma$?
\end{que}

\section{Some related questions}\label{questions}

The topic of this article can be developed in many ways, e.g. by considering characterizations of representability in particular structures. Below we list some problems which we found interesting.
\medskip

\textbf{Representations in $C[0,1]$.} The Banach space $C[0,1]$ of continuous real-valued functions on the unit interval with the sup-norm is (isomorphically) universal for the class of separable Banach spaces, i.e. it contains copies
(via linear homeomorphisms) of all separable Banach spaces. Therefore, all non-pathological analytic P-ideal are representable in $C[0,1]$.

\begin{que}
Is there any ``canonical'' way of representing non-pathological analytic P-ideals in $C[0,1]$? Here ``canonical'' stands for a simple construction of a representation of $\mathrm{Exh}(\sup_{n\in\om}\mu_n)$ in $C[0,1]$ from the defining sequence
$(\mu_n)_{n\in\om}$ of measures on $\om$.
\end{que}

\begin{que}
Assume that all non-pathological analytic P-ideals are representable in a Banach space $X$. Does it imply that $X$ is (isomorphically) universal for the class of all separable Banach spaces?
\end{que}

\textbf{Weak representations.} Another natural way to associate ideals to sequences in topological Abelian groups is the following: If $h:\om\to G$ then let
\[ \widetilde{\mc{I}}^G_h=\Big\{A\subseteq \om\colon \sum h\!\upharpoonright\!A\;\text{is Cauchy}\Big\}.\]

Clearly, $\widetilde{\mc{I}}^G_h$ is always an ideal, and $\mc{I}^G_h\subseteq\widetilde{\mc{I}}^G_h=\mc{I}^{\widetilde{G}}_h$ where $\widetilde{G}\supseteq G$ is the completion of $G$. Of course, if $G$ is complete, then $\mc{I}^G_h=\widetilde{\mc{I}}^G_h$.

For example, let $X$ be a Banach space and consider $X$ with the weak topology $w=\sigma(X,X^*)$. Then we can talk about ideals representable in the completion $\widetilde{X}$ of $(X,w)$. If we are interested in a special case, namely, ideals of the form $\widetilde{\mc{I}}^{X,w}_h$, i.e. ideals of the form $\mc{I}^{\widetilde{X}}_h$ where we work with sequences $h:\om\to X\subseteq\widetilde{X}$ with range in $X$ only, then applying \cite[page 44, Thm. 6]{Diestel} it is easy to see that these ideals are $F_\sigma$.
\begin{que}
Is there any characterization of ideals of the form $\widetilde{\mc{I}}^{X,w}_h$ where $X=c_0,\ell_1,\ell_\infty$?
\end{que}

\textbf{Ideals defined by families of finite sets.} It seems that numerous ideals can be written in a quite special form. For a function $f\colon \om \to [0,\infty)$ and finite set $F\sub \om$ define a measure in the following way: $\mu^f_F (A) =s_f(A\cap F)=\sum_{k\in A\cap F} f(k)$. Now for a family $\mathcal{F} \sub [\om]^{<\om}$ we can define a submeasure $\varphi^f_\mathcal{F} = \sup_{F\in\mathcal{F}} \mu^f_F$.

For instance, consider $2^{<\om}$ instead of $\om$, and let $f,g\colon 2^{\om} \to [0,\infty) $, $f(s)=2^{-|s|}$ and $g(s)=|s|^{-2}$. Then we obtain the following example:
\begin{itemize}
	\item if $\mathcal{F} = [2^{<\om}]^{<\om}$, then $\mathrm{Exh}(\varphi^f_\mathcal{F})$ is the summable ideal;
	\item if $\mathcal{F} = \{$levels$\}$, then  $\mathrm{Exh}(\varphi^f_\mathcal{F})$ is the density ideal;
	\item if $\mathcal{F} = \{$antichains$\}$, then $\mathrm{Exh}(\varphi^f_\mathcal{F})$ is $\mathrm{tr}(\mathcal{N})$;
	\item if $\mathcal{F}$ consists of finite subsets which do not have more than $2^n/n$ elements from $n$-th level, then $\mathrm{Exh}(\varphi^g_\mathcal{F})$ is Farah's ideal.
\end{itemize}

We hope that some families of finite sets (with an appropriate $f$) could give us some interesting ideals.

\begin{que}
Is there any characterization of ideals of the form $\mathrm{Exh}(\varphi^f_\mc{F})$? Or at least in the special cases described above (i.e. with a fixed $f$)?
\end{que}

\textbf{Basic representations.} Let $E$ be the canonical basic sequence of $c_0$. Consider a linear space $X'\subseteq c_0$ equipped with a norm $|| \cdot ||$ and let $X$ be the completion of $(X', ||\cdot||)$. We will say that an ideal $\mathcal{I}$ is
\emph{basically represented} in $X$ if
\[ A\in \mathcal{I} \mbox{ iff } \sum_{n\in A} \alpha_n x_n \mbox{ unconditionally converges in }X \]
for some sequence $(\alpha_n)_{n\in\om}$ from $\mathbb{R}^+$ and a sequence $(x_n)_{n\in\om}$ from $E$ (where we assume that $e = x_n$ only for finitely many $n$'s, for every $e\in E$).

It seems that this representations ties ideals with Banach spaces in a more evident way than the standard representation. E.g.
\begin{itemize}
	\item summable ideals are those basically representable in $\ell_1$;
	\item density ideals are those basically representable in $c_0$;
	\item Tsirelson ideals are basically representable in Tsirelson space(s).
\end{itemize}

Perhaps this approach can be used to construct peculiar Banach spaces. For example we can ask if there is a norm such that $\mathrm{tr}(\mathcal{N})$ is representable in the above way? 
\medskip


\bibliographystyle{alpha}
\bibliography{repr}

\end{document}